\documentclass[10pt]{article}
\usepackage[margin=1in]{geometry}
\usepackage{amsmath, amssymb, amsthm}
\usepackage{enumerate, enumitem}
\usepackage[mathscr]{euscript}
\usepackage{fancyhdr, graphicx, proof, comment, multicol}
\usepackage{changepage}
\usepackage[none]{hyphenat} % This command prevents hyphenation of words
\binoppenalty=\maxdimen % This command and the next prevent in-line equation breaks
\relpenalty=\maxdimen
\usepackage{microtype} % Modifies spacing between letters and words
\usepackage{mathpazo} % Modifies font. Optional package.
\usepackage{mdframed} % Required for boxed problems.
\usepackage{parskip} % Left justifies new paragraphs.

\newcommand{\R}{\mathbb{R}}
\newcommand{\C}{\mathbb{C}}

\newcommand{\N}{\mathbb{N}}

\newcommand{\st}{\text{ : }}

\newcommand{\X}{X}  
\newcommand{\XX}{X^*}
\newcommand{\BX}{\mathcal{B}(X)}
\newcommand{\K}{K}  
\newcommand{\KK}{K^*}

\title{The Basic Reproduction Number for Bounded Linear Operators on Ordered Banach Spaces}
\author{Zachary Gregg, Patrick De Leenheer }
\date{January 2026}

\begin{document}

\maketitle

\begin{abstract}
A basic reproduction number, $R_0$, is a concept encountered frequently in the study of ecological and epidemiological models. 
It is routinely used to determine the stability of an extinction or a disease-free fixed point or steady state. It is well-known that for linear models described by non-negative matrices, the spectral radius of the matrix is always contained in an interval with endpoints $1$ and $R_0$.
Here we extend these results to more general cone-preserving bounded linear operators acting on Banach spaces. 
\end{abstract}

\section{ Introduction}

A basic reproduction number, denoted $R_0$, is a tool used in ecological and epidemiological modeling as a marker for the stability of an extinction or disease-free fixed point or steady state; see \cite{allen,cushing-odo,cushing,pdl,diekmann,vdd} for examples. The mathematical underpinnings of basic reproduction numbers can be traced back at least to the 1960's in work by Varga \cite{varga} and Vandergraft \cite{vandergraft}. In essence, for a linear operator whose spectral radius is difficult or even impossible to compute, a basic reproduction number serves as a proxy for that spectral radius, and is itself defined as the spectral radius of an associated linear operator. To make this statement precise, we review 
a result in \cite{li-schneider} by Li and Schneider who define basic reproduction numbers for discrete-time dynamical systems of the form $x_{n+1} = Ax_{n}$, where $x_n$ is a non-negative vector (entry-wise) representing the state of the system at time step $n$, and $A$ is a non-negative matrix (entry-wise). The spectral radius of the matrix $A$ is denoted by $r(A)$, and defined as 
	$$
	r(A) := \max\{ |\lambda| \st \lambda \text{ is an eigenvalue of } A \}.
	$$

\newtheorem{theorem}{Theorem}[section]
\newtheorem{lemma}[theorem]{Lemma}
\newtheorem{corollary}[theorem]{Corollary}
\begin{theorem}\label{lischneidertrich}
(Theorem 3.3 in \cite{li-schneider})

% we need to define r(A) before this statement
Let $A$ be a nonnegative matrix (entry-wise). Suppose $A$ splits as $A=T+F$, where $T$ and $F$ are nonnegative matrices, and $r(T)<1$. Define the basic reproduction number $R_0:= r(F(I-T)^{-1})$. If $R_0>0$, then
$r(T+\frac{1}{R_0}F)=1$, and exactly one of the following cases holds: 

1. $R_0 = r(A) = 1,$

2. $R_0 \leq r(A) < 1,$

3. $R_0 \geq r(A) > 1.$
\end{theorem}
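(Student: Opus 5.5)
The plan is to combine a Perron--Frobenius construction with a splitting lemma of Varga type and with monotonicity of the spectral radius on nonnegative matrices, i.e.\ $0\le B\le C \Rightarrow r(B)\le r(C)$. The key identity is
\[
R_0\,\Bigl(T+\tfrac{1}{R_0}F\Bigr)=R_0T+F .
\]
I treat $r(T+\frac1{R_0}F)=1$ as the central fact, and then compare $A=T+F$ with $R_0T+F$ on each side of $1$.

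\textbf{Step 1 (splitting lemma).} Let $T,G\ge 0$ with $r(T)<1$, and set $P=G(I-T)^{-1}\ge 0$. I claim $r(T+G)<1$ if and only if $r(P)<1$.

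The starting point is the factorization $I-T-G=(I-P)(I-T)$. I also use the standard fact that for $M\ge 0$, $r(M)<1$ holds iff $I-M$ is invertible with $(I-M)^{-1}\ge 0$. One direction comes from the Neumann series. For the other, take a Perron vector $x\ge 0$ of $M$; then $(I-M)^{-1}x=x/(1-r(M))$ forces $r(M)<1$.

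Now suppose $r(T+G)<1$. Then
\[
(I-P)^{-1}=(I-T)(I-T-G)^{-1}=I+G(I-T-G)^{-1}\ge 0,
\]
so $r(P)<1$. Conversely, if $r(P)<1$, then $(I-T-G)^{-1}=(I-T)^{-1}(I-P)^{-1}\ge 0$, so $r(T+G)<1$.

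\textbf{Step 2 ($r(T+F/R_0)=1$).} For the lower bound, let $w\ge0$, $w\neq 0$, be a Perron vector of $F(I-T)^{-1}$, so $F(I-T)^{-1}w=R_0w$. Put $v=(I-T)^{-1}w\ge 0$, which is nonzero. Then $Fv=R_0(I-T)v$, that is, $(T+F/R_0)v=v$. Hence $r(T+F/R_0)\ge 1$.

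For the upper bound, apply Step 1 with $G=F/(R_0(1+\varepsilon))$. Then $r(G(I-T)^{-1})=1/(1+\varepsilon)<1$, so $r(T+G)<1$. Letting $\varepsilon\to 0$ and using continuity of the spectral radius gives $r(T+F/R_0)\le 1$.

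\textbf{Step 3 (trichotomy of signs).} Step 1 with $G=F$ gives $r(A)<1 \iff R_0<1$.

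If $R_0=1$, Step 2 reads $r(A)=r(T+F)=1$, which is case 1. Conversely, if $r(A)=1$ then $R_0\ge 1$ by the equivalence above. Also $R_0>1$ is impossible: it would give $A=T+F\le R_0T+F$, hence $r(A)\le R_0$ by monotonicity, but we need the reverse. So instead I argue as follows. If $R_0>1$, then $A\ge T+F/R_0$, so $r(A)\ge 1$. For strictness, suppose $r(A)=1$; then $r(A/(1+\delta))<1$ for every $\delta>0$. Applying Step 1 to the splitting $T/(1+\delta)+F/(1+\delta)$, together with $(I-T/(1+\delta))^{-1}\le (I-T)^{-1}$, yields $r\bigl(F(I-T)^{-1}\bigr)\le$ something tending to $1$ along a sequence, which contradicts $R_0>1$. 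This last limiting argument is the step I expect to need the most care. A clean alternative is to show that $s\mapsto r(T+F/s)$ is strictly decreasing near $s=R_0$. Either way, the conclusion is $r(A)=1\iff R_0=1$, and hence $r(A)>1\iff R_0>1$.

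\textbf{Step 4 (comparisons).} If $R_0<1$, then $R_0T+F\le T+F=A$. By Step 2 and monotonicity,
\[
R_0=R_0\, r(T+F/R_0)=r(R_0T+F)\le r(A),
\]
and $r(A)<1$ by Step 3. This is case 2.

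If $R_0>1$, then $A=T+F\le R_0T+F$, so $r(A)\le r(R_0T+F)=R_0$, and $r(A)>1$ by Step 3. This is case 3.

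Since the three conditions $R_0=1$, $R_0<1$ and $R_0>1$ are mutually exclusive and exhaustive, exactly one case holds.
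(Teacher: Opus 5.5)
There is a gap in the strictness part of Step 3, and it matters. Case 3 in Step 4 needs $r(A)>1$, not merely $r(A)\ge 1$, when $R_0>1$, and this step is what is supposed to supply it. Applying Step 1 to the splitting $\frac{T}{1+\delta}+\frac{F}{1+\delta}$ gives $r\bigl(F((1+\delta)I-T)^{-1}\bigr)<1$, since $\frac{1}{1+\delta}\bigl(I-\frac{T}{1+\delta}\bigr)^{-1}=((1+\delta)I-T)^{-1}$. But the comparison $\bigl(I-\frac{T}{1+\delta}\bigr)^{-1}\le (I-T)^{-1}$ only yields $F((1+\delta)I-T)^{-1}\le \frac{1}{1+\delta}F(I-T)^{-1}$. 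That is an upper bound of $R_0/(1+\delta)$ on a quantity you already know is below $1$. It points the wrong way and says nothing about $R_0$. The sentence beginning ``Also $R_0>1$ is impossible'' is a false start and should be deleted.

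There are two ways to repair it.
\begin{itemize}
\item Use continuity, as you already do in Step 2. As $\delta\to 0^+$, $F((1+\delta)I-T)^{-1}\to F(I-T)^{-1}$, and the spectral radius is continuous on matrices, so $R_0\le 1$, contradicting $R_0>1$.
\item Alternatively, suppose $R_0>1$ and $r(A)=1$. For every $s\in[1,R_0]$, $T+\frac{1}{R_0}F\le T+\frac{1}{s}F\le A$ forces $r(T+\frac{1}{s}F)=1$. Take a Perron vector $x_s$ of $T+\frac1s F$. Then $Fx_s=s(I-T)x_s\neq 0$ and $F(I-T)^{-1}Fx_s=sFx_s$, so the matrix $F(I-T)^{-1}$ would have infinitely many eigenvalues.
\end{itemize}

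With this fixed, your proof is complete, and it takes a different route from the paper. The paper does not reprove this matrix statement; it obtains it as a special case of Lemma \ref{spisone} and Theorem \ref{greggordtrich}. There, the upper bound $r(T+\frac{1}{R_0}F)\le 1$ comes from Thieme's convexity and monotonicity of $\lambda\mapsto r(F(\lambda-T)^{-1})$ (Corollary \ref{greggzrorpcor}). The sign equivalences are simply cited from Theorem \ref{thiemetrich}. Your lower bound via the eigenvector $(I-T)^{-1}w$ and your comparison of $A$ with $R_0T+F$ are essentially the paper's. Your Varga-type splitting lemma makes the argument elementary and self-contained, including the sign trichotomy. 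However, it relies on Perron eigenvectors and continuity of the spectral radius, which are finite-dimensional tools. That is why the paper uses Thieme's machinery to reach the Banach-space setting.
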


Since $r(A)$being less than or greater than 1 determines the stability of the zero fixed point under the matrix $A$, this shows that $R_0$ can also be used to determine the stability of the zero fixed point. This theorem gives a general version of the basic reproduction number that can be applied to the linearization of the zero steady state for any nonnegative discrete time dynamical system in $R^n$. Note that the basic reproduction number depends on the specific splitting, $A=T+F$, used. Some precautions about and examples using $R_0$ can be found in \cite{pdl}.

% Do we include the classical Leslie example here? 

%\textbf{Example - Leslie Matrix} 

%One of the simpler applications to illustrate this Theorem is population growth using a Leselie Matrix model.  

% Check this statement for accuracy

%\subsection{ Thieme Unordered Trichotomy } 

In 2009, Thieme gave an extension of this theorem that combines the theory of resolvent positive operators\cite{ArendtWolfgang1987RPO} \cite{VoigtJurgen1989Orpo}, completely montone functions \cite{Korenblumetal}\cite{RThiemeHorst1998Rorp}, and superconvex (logconvex) operator families \cite{KINGMANJ.F.C.1961ACPO}\cite{KatoTosio1982Sots}. The definitions used in the statement of the theorem will be introduced in the following sections, but we state this now to give a comparison with the previous theorem.

\begin{theorem}\label{thiemetrich} (Theorem 3.10 in \cite{THIEMEHORSTR.2009SBAR})
Let $X$ be a Banach space with a generating and normal cone $K$. Suppose $A$ is a bounded linear operator that preserves $K$, and $A$ splits as $A=T+F$, where $T$ and $F$ are both $K-$preserving linear operators with $r(T)<1$. Define $R_0:= r(F(I-T)^{-1})$. Then 

1. $R_0 = 1 \iff  r(A) = 1 $

2. $R_0<1 \iff  r(A) <1 $

3.$ R_0 > 1 \iff  r(A) >1 $

\end{theorem}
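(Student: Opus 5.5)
The plan is to reduce everything to one comparison: for $\lambda\ge 1$ we compare $r(A)$ with $1$ and $R_0$ with $1$ through the Neumann series of $(\lambda I-T)^{-1}$ and the identity
\[
\lambda I - A = (\lambda I - T) - F = \bigl(I - F(\lambda I-T)^{-1}\bigr)(\lambda I - T).
\]
Since $r(T)<1$, the operator $(\lambda I-T)^{-1}=\sum_{n\ge0}\lambda^{-n-1}T^n$ exists for $\lambda\ge 1$ and is positive. It is also decreasing in $\lambda$ in the cone order.

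The tools are the standard facts for positive operators on a Banach space with a generating, normal cone. First, $r(B)\in\sigma(B)$ for positive $B$ (Krein--Rutman/Schaefer). Second, for positive $B$ and $\lambda>r(B)$ the resolvent $(\lambda I-B)^{-1}$ is positive. Third, monotonicity: $0\le B_1\le B_2$ implies $r(B_1)\le r(B_2)$. Fourth, the converse characterization: if $B$ is positive and $(\lambda I-B)$ has a positive inverse for some $\lambda>0$, then $r(B)<\lambda$. Normality of $K$ gives the last two.

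\textbf{Step 1 (the set $r(A)<1$ versus $R_0<1$).}

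Suppose $R_0<1$. Then $(I-F(I-T)^{-1})^{-1}=\sum_n (F(I-T)^{-1})^n$ is positive. Hence $(I-A)^{-1}=(I-T)^{-1}(I-F(I-T)^{-1})^{-1}$ exists and is positive. By the fourth fact, $r(A)<1$.

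Conversely, suppose $r(A)<1$. Then $(I-A)^{-1}\ge 0$, and we use the other factorization
\[
I-A=(I-T)\bigl(I-(I-T)^{-1}F\bigr).
\]
This gives $(I-(I-T)^{-1}F)^{-1}=(I-A)^{-1}(I-T)\cdots$. Positivity is not immediate here. Instead, write
\[
(I-A)^{-1}=\sum_n A^n \ge \sum_n \bigl((I-T)^{-1}F\bigr)^k (I-T)^{-1},
\]
obtained by expanding $(T+F)^n$ and grouping the words by the number of $F$ factors. This bound shows the partial sums of $\sum_k ((I-T)^{-1}F)^k(I-T)^{-1}$ are order-bounded. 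By normality they are norm-bounded, and applying this on the generating cone gives $r((I-T)^{-1}F)\le 1$. To get strict inequality, perturb: $r(A)<1$ implies $r(A+\epsilon F)<1$ for small $\epsilon$, by continuity of the resolvent. Running the same argument with $(1+\epsilon)F$ yields $(1+\epsilon)r((I-T)^{-1}F)\le1$. Finally, $r((I-T)^{-1}F)=r(F(I-T)^{-1})=R_0$, since $r(BC)=r(CB)$. So $R_0<1$.

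\textbf{Step 2 (the case $>1$).}

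Suppose $r(A)>1$. Then $R_0<1$ would force $r(A)<1$ by Step 1, so $R_0\ge1$. To exclude $R_0=1$, use the perturbation trick again. If $R_0=1$, then $F_\epsilon=(1-\epsilon)F$ has $R_0(\epsilon)=1-\epsilon<1$, so $r(T+F_\epsilon)<1$ for every $\epsilon>0$. By upper semicontinuity of the spectral radius, $r(A)\le 1$, a contradiction.

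Symmetrically, suppose $R_0>1$. If $r(A)\le1$, consider $r(T+(1+\epsilon)F)$. The function $\epsilon\mapsto r(T+(1+\epsilon)F)$ is monotone, and we need to show it stays $\le 1$ near $0$ when $r(A)<1$, which Step 1 handles. The boundary case $r(A)=1$ needs the statement that $r(A)=1$ forces $R_0\le1$. This follows by applying the Step 1 argument to $A_\lambda=A/\lambda$ with $\lambda\downarrow1$, together with continuity of $\lambda\mapsto r(F(\lambda I-T)^{-1})$ on $[1,\infty)$, which is monotone decreasing.

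\textbf{Step 3.} Case 1 is the complement of cases 2 and 3, so it follows from the previous two steps.

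The \textbf{main obstacle} is the converse direction $r(A)<1\Rightarrow R_0<1$, and more generally the continuity and boundary arguments. The Neumann-series grouping bound must be justified in infinite dimensions using normality, and upper semicontinuity of $r$ only gives one-sided control. This is where Thieme's resolvent-positive and log-convexity machinery is really needed: the map $\lambda\mapsto r(F(\lambda I-T)^{-1})$ is log-convex and continuous, and that is what handles the equality case.
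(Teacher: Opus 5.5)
\textbf{Verdict.} Your route is viable and differs from the paper's, but Step~2 has a step that fails as written and a second step that rests on an unproved continuity claim.

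\textbf{What works.} Your Step~1 is correct. Unlike the paper, it uses only resolvent positivity and normality. For $R_0<1$ you get $(I-A)^{-1}=(I-T)^{-1}\sum_n (F(I-T)^{-1})^n\ge 0$, and then your fourth fact gives $r(A)<1$. The converse can be done more directly than by word-grouping plus perturbation: $(I-F(I-T)^{-1})^{-1}=(I-T)(I-A)^{-1}=I+F(I-A)^{-1}\ge 0$, and the fourth fact again applies.

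\textbf{First problem: semicontinuity in the wrong direction.} To exclude $R_0=1$ when $r(A)>1$, you pass from $r(T+(1-\epsilon)F)<1$ for all $\epsilon>0$ to $r(A)\le 1$ ``by upper semicontinuity''. Upper semicontinuity only gives $\limsup_{\epsilon\to 0} r(T+(1-\epsilon)F)\le r(A)$, which is the opposite of what you need. You would need lower semicontinuity, and that fails in general even for positive operators increasing in norm to $A$. Kakutani's weighted shift on $\ell_2$ has nilpotent approximants $0\le A_k\le A$ with $\|A-A_k\|\to 0$ and $r(A)>0$. The conclusion is easily rescued by order rather than topology. Since $T+(1-\epsilon)F\ge (1-\epsilon)A$, Theorem~\ref{ordersprad} and Corollary~\ref{spmapconsq} give $(1-\epsilon)r(A)\le r(T+(1-\epsilon)F)<1$ for every $\epsilon\in(0,1)$, hence $r(A)\le 1$.

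\textbf{Second problem: the direction $R_0>1\Rightarrow r(A)>1$.} Equivalently, you must show $r(A)=1\Rightarrow R_0\le 1$. Applying Step~1 to $A/\lambda$ correctly gives $\phi(\lambda):=r(F(\lambda-T)^{-1})<1$ for all $\lambda>1$. Passing to $\phi(1)\le 1$, however, requires right-continuity of $\phi$ at $1$. Neither monotonicity of $\phi$ nor upper semicontinuity of $r$ provides it: both yield only $\phi(1)\ge \lim_{\lambda\downarrow 1}\phi(\lambda)$, so a downward jump of $\phi$ at $1$ is not excluded. This is the one non-elementary ingredient, and you assert it rather than prove it. It is available from Corollary~\ref{greggzrorpcor} (Thieme's Theorem~\ref{thiemerorpthm}): $\phi$ is convex on the open interval $(r(T),\infty)\ni 1$, hence continuous at $1$. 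With that citation your proof closes.

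\textbf{Comparison with the paper.} The paper takes the statement from Thieme, and its framework obtains all three cases at once from Corollary~\ref{greggzrorpcor}. Either $r(A)=r(T)<1$, or $r(A)>r(T)$ and $\phi(r(A))=1$. Since $\phi$ is non-increasing, convex and tends to $0$, $r(A)$ then lies on the same side of $1$ as $R_0=\phi(1)$. Your route isolates the ``$<1$'' equivalence as an elementary fact and needs Thieme only for continuity at a single point.

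\textbf{Avoiding Thieme entirely.} You can push your own word-grouping to remove the dependence. Assume $r(A)\le 1$, fix $\delta>0$ and $\rho\in(r(T),1)$ with $\|T^a\|\le C\rho^a$ for all $a\ge 0$. Sort the words in $((I-T)^{-1}F)^k(I-T)^{-1}$ by their number $j$ of $T$ factors.
\begin{itemize}
\item For $j<Lk$: each group is dominated by $A^{k+j}$, so the total norm is at most a constant times $Lk(1+\delta)^{(L+1)k}$.
\item For $j\ge Lk$: the norm of each group is at most $\binom{j+k}{k}C^{k+1}\rho^j\|F\|^k$.
\end{itemize}
Choose $L$ large and then let $\delta\to 0$. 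This gives $\limsup_k\|((I-T)^{-1}F)^k\|^{1/k}\le 1$, that is, $R_0\le 1$.
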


% If we need extra discussion here, we could say that the positive orthant in $R^n$ is always generating since it has interior. Furthermore, every finite dimensional cone is normal (Cite Nussbaum and Lemmons for this fact, from their Perron Frobenius book). 

Note that, unlike in the theorem by Li and Schneider, the previous theorem does not guarantee that $R_0$ is a bound for the spectral radius of the operator $A$. Our main goal is to show that we can recover $R_0$ as a bound for $r(A)$ in this Banach space setting without any further assumptions.

In order to prove this extension, we will need to make use of another theorem by Thieme \cite{RThiemeHorst1998Rorp} that shows the map $\lambda \to r(F(\lambda I-T)^{-1})$ on $(r(T),\infty)$ is a non-increasing and convex function that will achieve $r(F(\lambda^* I-T)^{-1})< 1$ for some $\lambda^*> r(T)$. The following section will give the background needed in order to state this theorem. 

\section { Background }

\subsection{Cones}

The following generalizes the notion of nonnegativity to the Banach space setting. Throughout let $\X\neq\{0\}$ be a real Banach space, $\XX$ the dual space of bounded (continuous) linear functionals on $\X$, and $\BX$ the Banach space of continuous linear operators from $\X$ to $\X$. 

%\textbf{Definitions}

%[Talk about Krein Rutman as soon as possible in effort to tie back to introductory discussion. This will have to wait until the spectral theory portion. Then talk about generalizing to guaranteeing spectral radius to be in the spectrum and the pole of the resolvent stuff. Go pull from Master's thesis for pringsheim's Theorem and all the good stuff. Don't forget uniqueness of eigenvectors for cone-preserving operators, this may be good to motivate the geometric elements.]

Define a \textbf{cone} $\K\subseteq\X$ to be a closed convex set that is \textbf{positively homogeneous} ($x\in\K$ and $\alpha\geq0$ implies that $\alpha x\in\K$), and \textbf{pointed} (if $x\in\K$ and $-x\in\K$, then $x=0$). We say $A\in\BX$ is a cone-preserving operator with respect to $\K$, or $\K$-preserving, if $A(\K)\subseteq\K$, i.e. $Ax\in\K$ for all $x\in\K$. When the cone $\K$ is clear from context, we simply say $A$ is cone-preserving. Note that a cone imposes a partial ordering, $``\geq,"$ on the Banach space $\X$ defined by $z\geq y$ for elements $y,z\in\X$ means $z-y\in\K$. In addition to assuming that $\X$ is a Banach space, throughout we will assume that we have fixed a cone $\K$ that orders our Banach space in this way. 

A cone is said to be \textbf{generating} if $\X = \K-\K$, i.e. for any $x\in\X$ there exists $y,z\in\K$ such that $x=y-z$.  It is easy to check that the set of cone-preserving linear operators forms a cone in $\BX$ when $\K$ is a generating cone on $\X$.  Similarly, if $A$ and $B$ are cone-preserving operators on $\X$ with respect to $\K$, we write $A\geq B$ (using a slight abuse of notation since it should clear whether our objects are vectors in $\X$ vs $\BX$) to mean that $A-B$ is cone-preserving with respect to $\K$. 

For a given cone $\K$, define the \textbf{dual cone} to be the set 
    $$ \KK = \{ f\in\XX \st f(x)\geq0 \text{ } \forall x\in\K\}. 
    $$
Despite terminology, $\KK$ may not satisfy the definitions to be a cone. For example, in $\R^2$ with the usual inner product, and the cone given as the nonnegative $x-$axis, $\K= \{(a,0)\in\R^2 \st a\geq0\}$, the dual cone is 
	$$\KK=\{y\in \R^2\st \langle x,y\rangle\geq0 \text{ } \forall x\in\K \} = \{(a,b) \st a\geq0\},
	$$
which is not pointed. However, whenever $\K$ is generating, $\KK$ is a cone. (Actually it is simple consequence of a Hahn-Banach separation theorem that $\KK$ is  a cone if and only if $\X= \overline{\K -\K}$, the topological closure of $\K-\K$. See chapter six in \cite{Deimling} for related discussion).

We say a cone is \textbf{normal} if there is a constant $\gamma>0$ such that whenever $0\leq x \leq y$, $||x||\leq \gamma||y||$. It is well known that a cone is normal if and only if its dual cone is generating, and similarly a cone is generating if and only if its dual cone is normal (see proposition 19.4 in \cite{Deimling}).

\textbf{Remark:} In Theorem \ref{lischneidertrich}, Li and Schneider consider the Banach space $\R^n$ ordered by the nonnegative orthant cone $\K=\{x=(x_1,x_2,..., x_n) \st x_i\geq0 \text{ } \forall i=1,2,...,n\}$. This cone is clearly generating: any vector in $\R^n$ is equal to the difference of two vectors with only nonnegative entries. This cone is also normal; in fact, any cone in a finite dimensional vectorspace is normal by the following lemma:

\begin{lemma}\label{finiteconesarenormal}(Lemma 1.2.5 in \cite{LemmensNussbaum}) Any cone defined on a finite dimensional, real normed vector space is normal. 
\end{lemma}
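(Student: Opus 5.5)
The plan is to argue by contradiction, using compactness of the unit sphere in finite dimensions. Let $V$ be a finite dimensional real normed space and $\K\subseteq V$ a cone. If $\K$ is not normal, then for every $n\in\N$ there are $x_n,y_n$ with $0\leq x_n\leq y_n$ and $\|x_n\|> n\|y_n\|$. In particular $x_n\neq 0$. We normalize by setting $u_n = x_n/\|x_n\|$ and $v_n = y_n/\|x_n\|$. Positive homogeneity of $\K$ preserves the order relations, so $0\leq u_n\leq v_n$, with $\|u_n\|=1$ and $\|v_n\|<1/n$. Hence $v_n\to 0$.

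The unit sphere of $V$ is compact, since all norms on a finite dimensional space are equivalent (Heine--Borel). So we may pass to a subsequence along which $u_n\to u$ with $\|u\|=1$. Now $u_n\in\K$ and $v_n-u_n\in\K$, and $\K$ is closed. Letting $n\to\infty$ gives $u\in\K$ and $0-u=-u\in\K$. Pointedness of $\K$ then forces $u=0$, which contradicts $\|u\|=1$. Therefore $\K$ is normal.

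There is no serious obstacle in this argument. The only step needing care is the compactness of the unit sphere, and that is standard. The hypotheses used are exactly the ones built into the paper's definition of a cone: closedness, positive homogeneity, and pointedness. Convexity is not needed.
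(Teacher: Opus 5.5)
Your proof is correct and complete. The key steps all work: normalizing to $\|u_n\|=1$ and $\|v_n\|<1/n$, extracting a convergent subsequence from the compact unit sphere, and using closedness of $K$ to get $u\in K\cap(-K)$ with $\|u\|=1$. You are also right that convexity plays no role.

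The paper does not prove this lemma; it only cites Lemma 1.2.5 of Lemmens--Nussbaum, so there is no in-paper argument to compare against.

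For comparison, the other standard route goes through a strictly positive functional. In finite dimensions, a closed, pointed, convex cone has a dual cone $K^*$ with nonempty interior. Any interior point $\varphi$ of $K^*$ satisfies $\varphi(x)>0$ for all $x\in K\setminus\{0\}$. Then $c=\max\{\|x\| : x\in K,\ \varphi(x)=1\}$ is finite, and whenever $0\le x\le y$ this gives $\|x\|\le c\,\varphi(x)\le c\,\varphi(y)\le c\|\varphi\|\,\|y\|$. This is the finite-dimensional instance of the duality ``normal iff dual cone generating'' that the paper quotes from Deimling.

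The two routes trade off as follows:
\begin{itemize}
\item The functional route needs convexity and a separation or bipolar argument. In return it gives an explicit constant $\gamma=c\|\varphi\|$ and a strictly positive functional as a by-product.
\item Your route is shorter but does not produce a value for $\gamma$. It works for any closed, positively homogeneous, pointed set, and it shows that compactness of the unit sphere is the only finite-dimensional ingredient.
\end{itemize}
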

Thus, while we will later impose that the Banach space $\X$ is ordered by a generating and normal cone, this is not a restriction compared to Theorem \ref{lischneidertrich} because the nonnegative orthant cone meets both requirements.

\subsection{ Spectral Theory }

%[Talk about connection to stability as motivation]

%[talk about the complexification, maybe refer to Deimling]

% maybe include the statement that cone-preserving linear operators defined on all of \X are bounded in the appendix?

%For citations I think there was a good treatment of this in Engel and Nagel
Here we review some spectral theory. While our main results are stated for bounded operators, we do need some generalized notions for linear operators which are not necessarily bounded. We refer to chapter IV in \cite{EngelNagelSpectralTheory} for a more thorough discussion of this theory. For the following, let $A$ be a linear operator with dense domain $D(A)\subseteq \X$. $A$ is assumed to be a closed operator, meaning that the graph of $A$ is closed in $\X\times\X$. Note that any bounded operator on $\X$ is necessarily a closed operator by the closed graph theorem.

%complexificaiton 

%Just as when computing eigenvalues of a matrix acting on $R^n$, 
Though we are considering operators on a real Banach space, for various applications of spectral theory we consider the complexification of $(\X,||\cdot||)$, $\X_{\C}:=\X+i\X$, with norm given by 
	$$|| x+iy||_{\X_{\C}}=\sup_{\theta\in[0,2\pi]} ||\cos(\theta)x + \sin(\theta)y||,
	$$
for $x,y\in X$. Given a closed operator $A$ with domain $D(A)\subseteq \X$, we can then extend $A$ to its complexification $\tilde{A}$. The operator $\tilde{A}$ is a linear operator with domain $D(\tilde{A})=\{x+iy\st x,y\in D(A)\}$ dense in $\X_{\C}$. The operator $\tilde{A}$ is defined to act on $\X_{\C}$ by $\tilde{A}(x+iy)=Ax+iAy$, and is a closed operator when $A$ is. If $A:\X\to\X$ is bounded, then the norm on  $\X_{\C}$ is such that $||A||=||\tilde{A}||_{X_{\C}}$.  For the remainder of the section, we let $A$ implicitly refer to the complexification $\tilde{A}$. Define the \textbf{resolvent set} of an operator $A$ to be 
	$$\rho(A) := \{ \lambda \in\C \st  \lambda I -A \text{ is a bijection from $D(A)$ onto $X.$} \}.
	$$
For $\lambda\in\rho(A)$, $(\lambda I -A)^{-1}$ is a well defined linear operator on $X$, and is called the \textbf{resolvent of $A$ with respect to $\lambda$}. Moreover, since $A$ is a closed operator, $(\lambda I- A)^{-1}$ is in fact a bounded linear operator. We will frequently let $I$ be understood and instead write $(\lambda-A)^{-1}$. Define the \textbf{spectrum} of an operator $A$, $\sigma(A):= \C\setminus \rho(A)$.  %cite any functional analysis text

For any closed operator, define the \textbf{spectral abscissa} $s(A):= \sup\{ Re(\lambda) \st \lambda\in\sigma(A)\}$, where $Re(\lambda)$ is the real part of $\lambda\in\C$. Note that we allow $s(A)=\infty$ if this supremum is not finite, and $s(A)=-\infty$ if $\sigma(A)=\emptyset$. 

 If $A$ is a bounded linear operator, it is well known that $\sigma(A)$ is a nonempty compact subset of $\C$ (see for example Corollary 1.4 in chapter IV of \cite{EngelNagelSpectralTheory}). For any bounded linear operator $A$, we define the \textbf{spectral radius} of $A$ to be 
	$$r(A) := \sup\{ |\lambda| \st \lambda\in\sigma(A)\}.
	$$ 
Note that $r(A)$ is well defined whenever $A$ is bounded.

% A version of the spectral mapping thm for closed operators can be cited from Dunford and Schwartz. 
\begin{theorem}\label{spmapthm} (Spectral Mapping Theorem for Polynomials; see Theorem 7.4-2 in \cite{Kreyszig}) Let $A$ be a bounded linear operator on a complex Banach space $X$, and 
	$$ p(\lambda)  = \alpha_n \lambda^n + \alpha_{n-1} \lambda^{n-1} + \cdots + \alpha_0  
	$$
be a polynomial with $\alpha_n\neq0$. Then 
	$$\sigma(p(A)) = p(\sigma(A)),
	$$
i.e. the spectrum of $p(A)= \alpha_n A^n + \alpha_{n-1} A^n + \cdots + \alpha_0 I$ is $\{\alpha_n \mu^n + \alpha_{n-1} \mu^{n-1} + \cdots + \alpha_0  \st  \mu\in \sigma(A)\}$. 
\end{theorem}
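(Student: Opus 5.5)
The plan is to prove the two inclusions $p(\sigma(A))\subseteq\sigma(p(A))$ and $\sigma(p(A))\subseteq p(\sigma(A))$ separately. Both rest on one algebraic fact: for fixed $\mu\in\C$, the polynomial $p(\lambda)-p(\mu)$ vanishes at $\lambda=\mu$, so it has $(\lambda-\mu)$ as a factor. Throughout, $A$ is bounded, so every polynomial in $A$ is bounded and commutes with $A$ and with every other polynomial in $A$.

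\textbf{Step 1: $p(\sigma(A))\subseteq\sigma(p(A))$.} Fix $\mu\in\sigma(A)$ and write $p(\lambda)-p(\mu)=(\lambda-\mu)q(\lambda)$ for some polynomial $q$. Substituting $A$ gives
$$p(A)-p(\mu)I=(A-\mu I)q(A)=q(A)(A-\mu I).$$
Suppose for contradiction that $p(A)-p(\mu)I$ is bijective. The first factorization shows $A-\mu I$ is surjective. The second shows $A-\mu I$ is injective, since $(A-\mu I)x=0$ forces $(p(A)-p(\mu)I)x=0$. Hence $A-\mu I$ is a bijection, which contradicts $\mu\in\sigma(A)$. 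Therefore $p(\mu)\in\sigma(p(A))$.

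\textbf{Step 2: $\sigma(p(A))\subseteq p(\sigma(A))$.} Fix $\nu\in\sigma(p(A))$. Since $\C$ is algebraically closed and $\alpha_n\neq0$, we can factor
$$p(\lambda)-\nu=\alpha_n(\lambda-\mu_1)\cdots(\lambda-\mu_n).$$
This yields the corresponding factorization of commuting bounded operators
$$p(A)-\nu I=\alpha_n(A-\mu_1I)\cdots(A-\mu_nI).$$
If every $\mu_j$ lay in $\rho(A)$, each factor would be invertible with bounded inverse. The product would then be invertible, which contradicts $\nu\in\sigma(p(A))$. Hence some $\mu_j\in\sigma(A)$, and $p(\mu_j)=\nu$ gives $\nu\in p(\sigma(A))$.

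The degree-zero case needs a brief separate remark. If $n=0$, then $p(A)=\alpha_0I$ has spectrum $\{\alpha_0\}$, and $p(\sigma(A))=\{\alpha_0\}$ because $\sigma(A)\neq\emptyset$ for bounded $A$. This is the one place where nonemptiness of the spectrum is used. Ensuring it is the only delicate point, together with keeping track of injectivity versus surjectivity in Step 1, where commutativity of the factors is essential. Otherwise the argument is routine.
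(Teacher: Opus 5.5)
Your proof is correct. The paper gives no proof of its own and only cites Kreyszig, Theorem 7.4-2, and your argument is essentially that standard one: completely factor $p(\lambda)-\nu$ over $\C$ to get $\sigma(p(A))\subseteq p(\sigma(A))$, extract the factor $(\lambda-\mu)$ and use commutativity for the reverse inclusion, and treat the constant case via $\sigma(A)\neq\emptyset$. Using the two factor orderings to get surjectivity and injectivity of $A-\mu I$ directly is a slight streamlining of Kreyszig's split into the cases ``not injective'' and ``injective but not onto,'' and it matches the paper's bijectivity definition of $\rho(A)$.
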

The following useful corolary follows from the above spectral mapping theorem:
\begin{corollary}\label{spmapconsq} If $A$ is a bounded linear operator and $\alpha\geq0$, then $\alpha r(A)= r(\alpha A)$. 
\end{corollary}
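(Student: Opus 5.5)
The plan is to split on whether $\alpha>0$ or $\alpha=0$, and in the main case to apply the Spectral Mapping Theorem (Theorem \ref{spmapthm}) to the degree-one polynomial $p(\lambda)=\alpha\lambda$. Since $A$ is bounded, so is $\alpha A$, and $\sigma(A)$ is a nonempty compact set; hence both $r(A)$ and $r(\alpha A)$ are well defined.

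\textbf{Case $\alpha>0$.} The leading coefficient of $p(\lambda)=\alpha\lambda$ is nonzero, so Theorem \ref{spmapthm} applies and gives $\sigma(\alpha A)=\{\alpha\mu \st \mu\in\sigma(A)\}$. Because $\alpha$ is a nonnegative real, $|\alpha\mu|=\alpha|\mu|$. Therefore
$$
r(\alpha A)=\sup\{|\alpha\mu| \st \mu\in\sigma(A)\}=\alpha\sup\{|\mu| \st \mu\in\sigma(A)\}=\alpha r(A),
$$
where we pull the positive constant $\alpha$ out of the supremum of a nonempty bounded set.

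\textbf{Case $\alpha=0$.} Here the leading coefficient of $p$ vanishes, so Theorem \ref{spmapthm} cannot be used directly. Instead we compute $\sigma(0)$ by hand. For $\lambda\neq0$ the operator $\lambda I$ is invertible, so $\lambda\in\rho(0)$. For $\lambda=0$ the operator $0\cdot I$ is not a bijection, since $X\neq\{0\}$ and so its complexification is nonzero. Hence $\sigma(0)=\{0\}$, which gives $r(0\cdot A)=0=0\cdot r(A)$.

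The only real obstacle is this degenerate case $\alpha=0$, which falls outside the hypothesis $\alpha_n\neq0$ of Theorem \ref{spmapthm}. It is handled by the direct spectrum computation above, which uses the standing assumption $X\neq\{0\}$.
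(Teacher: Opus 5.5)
Your proof is correct and follows the same route as the paper, which simply says the corollary follows from the Spectral Mapping Theorem (Theorem~\ref{spmapthm}) applied to $p(\lambda)=\alpha\lambda$. Your separate treatment of $\alpha=0$, where the hypothesis $\alpha_n\neq 0$ fails and you compute $\sigma(0)=\{0\}$ directly using $X\neq\{0\}$, fills in a degenerate case the paper does not address.
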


In the case where $\X$ is a finite dimensional Banach space ordered by a generating and normal cone, the Perron-Frobenius theorem guarantees that the spectral radius of $A$ is an eigenvalue with corresponding eigenvector in $\K$. However, when $\X$ is infinite dimensional, $r(A)$ is not guaranteed to be an eigenvalue, but is guaranteed to be a point of the spectrum. 
% see if we can quote schaefer too? 
\begin{theorem}\label{spradinsp} (Exercise 11 in Chapter 6, Section 19, of \cite{Deimling}) \textit{Let $A$ be a bounded linear operator that preserves a generating and normal cone. Then $r(A)\in\sigma(A)$. }
\end{theorem}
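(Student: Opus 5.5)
The plan is to combine the Neumann series for the resolvent with Pringsheim-type reasoning on vector-valued power series whose coefficients lie in the cone, using normality to pass from norm estimates on positive elements to a full operator estimate. Let $r=r(A)$. If $r=0$, then $0\in\sigma(A)$ because $\sigma(A)$ is nonempty and contained in the disk of radius $0$, so we may assume $r>0$. By Corollary \ref{spmapconsq} we may rescale and assume $r=1$.

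We argue by contradiction and suppose $1\in\rho(A)$. The resolvent $R(\lambda)=(\lambda-A)^{-1}$ is analytic on $\rho(A)$, which is open. Hence $R$ is analytic on a neighborhood of $\{|\lambda|\ge 1\}\setminus\{\text{points of }\sigma(A)\text{ on the unit circle}\}$ together with a neighborhood of $\lambda=1$.

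For real $\lambda>1$ we have the Neumann series $R(\lambda)=\sum_{n\ge0}\lambda^{-n-1}A^n$. Each term maps $K$ into $K$, so $R(\lambda)\ge 0$ for $\lambda>1$, and by continuity of $R$ at $1$ also $R(1)\ge 0$.

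The key step is a Pringsheim argument. Fix $x\in K$ and $f\in K^*$, and set $\phi(\mu)=f(R(1/\mu)x)/\mu=\sum_n \mu^n f(A^nx)$. This power series in $\mu$ has nonnegative coefficients. The resolvent is analytic near $\lambda=1$, so $\phi$ extends analytically to a neighborhood of $\mu=1$. By Pringsheim's theorem, a power series with nonnegative coefficients has a singularity at the positive point of its circle of convergence. Since $\phi$ is regular at $\mu=1$, its radius of convergence exceeds $1$, so $\sum_n (1+\epsilon)^n f(A^nx)<\infty$ for some $\epsilon>0$ depending on $x,f$. A cleaner route avoids this dependence: since $R(1)\ge0$, expanding $R(\lambda)$ around $\lambda=1$ gives derivatives $R^{(k)}(1)=(-1)^k k!\,R(1)^{k+1}$. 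The Taylor series of $R$ at $\lambda=1$, evaluated at $\lambda=1-\delta$, is therefore $\sum_k \delta^k R(1)^{k+1}$, which has cone-preserving terms and converges in operator norm for small $\delta>0$.

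Next, the identity $\sum_{n=0}^N A^n\le R(1)$ holds in the order of $\mathcal{B}(X)$, obtained by comparing partial sums of the Neumann series as $\lambda\downarrow1$ with the closed cone. Applying $f\in K^*$ and using that $K^*$ is generating (dual of normal) together with $K$ generating, we get $\sup_N\|\sum_{n\le N}A^n\|<\infty$. More usefully, the same comparison at $\lambda=1-\delta$ gives $\sum_n (1-\delta)^{-n-1}A^n\le R(1-\delta)$: the partial sums satisfy $0\le \sum_{n\le N}\lambda^{-n-1}A^n x\le R(\lambda)x$ for $x\in K$, which is proved by the resolvent identity and positivity of the Taylor expansion. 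Normality then bounds $\|\lambda^{-n-1}A^nx\|$ uniformly in $n$ for $x\in K$, and generating-ness together with a uniform boundedness argument (the decomposition $x=y-z$ with $\|y\|,\|z\|\le M\|x\|$, valid for generating cones by the Krein–Šmulian/open mapping theorem) gives $\|A^n\|\le C(1-\delta)^{n}$. Therefore $r(A)\le 1-\delta<1$, a contradiction.

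The main obstacle is rigorously establishing the inequality $\sum_{n\le N}\lambda^{-n-1}A^n\le R(\lambda)$ for $\lambda$ slightly below $1$, where the Neumann series is not yet known to converge. I would handle this by noting that $\lambda\mapsto f(R(\lambda)x)$ is real-analytic on $(1-\delta,\infty)$ and equals the nonnegative-coefficient series in $1/\lambda$ for $\lambda>1$. Pringsheim's theorem then forces that series to converge on $(1-\delta,\infty)$ to the same function, which yields the scalar inequalities. Uniformity in $x$ and $f$ comes from the uniform boundedness principle applied twice, using the normality and generating constants.
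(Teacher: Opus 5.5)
Your argument is correct. The paper gives no proof of this statement; it only cites Deimling. The usual textbook argument works differently from yours. For $|\lambda|>r(A)$, $x\in K$, $f\in K^*$, the Neumann series gives $|f(R(\lambda)x)|\le f(R(|\lambda|)x)$. Normality and the generating property upgrade this to $\|R(\lambda)\|\le C\|R(|\lambda|)\|$. Approaching a spectral point $\lambda_0$ with $|\lambda_0|=r(A)$ radially, $\|R(t\lambda_0)\|\ge 1/\mathrm{dist}(t\lambda_0,\sigma(A))\to\infty$, which forces $\|R(t\,r(A))\|\to\infty$ as $t\downarrow1$.

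You instead stay on the real axis. Positivity of $R(1)$ propagates to $R(1-\delta)=\sum_k\delta^kR(1)^{k+1}\ge0$, and normality plus generating turn this into $\|A^n\|\le C(1-\delta)^n$, contradicting Gelfand's formula.

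The step you call the main obstacle is a one-line identity, so all the Pringsheim material can be dropped. For any nonzero $\lambda\in\rho(A)$,
\[
R(\lambda)-\sum_{n=0}^{N}\lambda^{-n-1}A^n=\lambda^{-N-1}A^{N+1}R(\lambda),
\]
since $(\lambda-A)\sum_{n=0}^{N}\lambda^{-n-1}A^n=I-\lambda^{-N-1}A^{N+1}$. With $\lambda=1-\delta\in(0,1)$ and $R(\lambda)\ge0$ the right side is cone-preserving, so $0\le\lambda^{-N-1}A^Nx\le R(\lambda)x$ for $x\in K$ follows at once. Your Pringsheim version is also valid, but not needed. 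It works because the segment $[0,1/(1-\delta))$ lies in the domain of analyticity, and because the scalar inequalities give the vector ones via $K=\{y: f(y)\ge 0\ \forall f\in K^*\}$.

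The trade-off between the two routes is as follows. Yours avoids complex resolvent estimates and the complexified norm entirely. It in fact shows that for real $\lambda>0$, $\lambda>r(A)$ if and only if $\lambda\in\rho(A)$ and $R(\lambda)\ge0$, and the theorem follows from this by continuity of $R$ at $r(A)$. The standard route instead yields the quantitative domination $\|R(\lambda)\|\le C\|R(|\lambda|)\|$ and the blow-up of $\|R(t)\|$ as $t\downarrow r(A)$.
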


We will also need the following comparison theorem:

\begin{theorem}\label{ordersprad}
(Ordering of Spectral Radii \cite{MarekIvo1970FToP}) Let $X$ be a Banach space with a generating and normal cone $K$. Let $A$ and $B$ be cone-preserving operators such that $B\leq A$. Then 
    $$ r(B) \leq r(A).
    $$
\end{theorem}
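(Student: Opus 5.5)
The plan is to compare the norms of powers and then use Gelfand's formula $r(A)=\lim_{n\to\infty}\|A^n\|^{1/n}$, which holds for any bounded operator and is unchanged by complexification since $\|A^n\|=\|\tilde A^n\|$. Throughout, $0\le B\le A$ means that $B$ and $A-B$ are both $\K$-preserving.

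\textbf{Step 1: ordering of powers.} First I show by induction that $0\le B^n\le A^n$ for every $n\ge1$. The case $n=1$ is the hypothesis. For the inductive step I use the identity
\[
A^{n+1}-B^{n+1}=A^n(A-B)+(A^n-B^n)B .
\]
Each term on the right is a composition of $\K$-preserving operators, so it is $\K$-preserving. Since $\K$ is a convex cone, the sum of the two terms is also $\K$-preserving.

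\textbf{Step 2: a norm comparison.} Next I prove an estimate $\|B^n\|\le C\|A^n\|$ with $C$ independent of $n$. This is where both cone hypotheses enter.

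Normality gives a constant $\gamma$ such that $0\le u\le v$ implies $\|u\|\le\gamma\|v\|$. For $y\in\K$ we have $0\le B^ny\le A^ny$, and therefore $\|B^ny\|\le\gamma\|A^ny\|$.

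To pass from $\K$ to all of $\X$, I need a \emph{bounded} decomposition. That is, there should be an $M$ such that every $x\in\X$ can be written $x=y-z$ with $y,z\in\K$ and $\|y\|+\|z\|\le M\|x\|$. Granting this, any $x\in\X$ satisfies
\[
\|B^nx\|\le\|B^ny\|+\|B^nz\|\le\gamma\|A^n\|(\|y\|+\|z\|)\le\gamma M\|A^n\|\,\|x\|,
\]
so $\|B^n\|\le\gamma M\|A^n\|$. Taking $n$th roots and letting $n\to\infty$ gives $r(B)\le\lim(\gamma M)^{1/n}\|A^n\|^{1/n}=r(A)$.

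\textbf{Main obstacle: the bounded decomposition constant $M$.} I expect this step to be the hardest; it is the Krein--Šmulian/Ando theorem for closed generating cones. My plan is a Baire category argument.

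1. Set $D_k=\{y-z: y,z\in\K,\ \|y\|,\|z\|\le k\}$. These sets are convex and symmetric, and they cover $\X$ because $\K$ is generating.
2. By Baire's theorem, some $\overline{D_k}$ has nonempty interior. Convexity and symmetry then show that $\overline{D_k}$ contains a ball around $0$.
3. An open-mapping-style iteration, using completeness of $\X$ and closedness of $\K$, removes the closure. Concretely, I approximate $x$ successively, writing $x=\sum (y_j-z_j)$ with $\|y_j\|,\|z_j\|\le k2^{-j}$. I then set $y=\sum y_j$ and $z=\sum z_j$. Both series converge, and their sums lie in $\K$ because $\K$ is closed and convex.

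This yields the constant $M$. Alternatively, I could cite this as a standard fact (it appears in \cite{Deimling}, Chapter 6).
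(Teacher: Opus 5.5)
Your proof is correct. The identity $A^{n+1}-B^{n+1}=A^n(A-B)+(A^n-B^n)B$ gives $0\le B^n\le A^n$. Normality together with the Krein--\v{S}mulian bounded decomposition then gives $\|B^n\|\le\gamma M\|A^n\|$, and Gelfand's formula finishes the argument. Your Baire/open-mapping sketch of the decomposition constant also works, once you rescale from the small ball to all of $X$.

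The paper gives no proof of this statement; it simply imports it from Marek. Your write-up is therefore a genuinely self-contained alternative.

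The step you flagged as the main obstacle can be bypassed:
\begin{itemize}
\item Fix $\lambda>r(A)$, so that $\lambda^{-n}\|A^n\|\to0$.
\item For $y\in K$, normality gives $\|\lambda^{-n}B^ny\|\le\gamma\lambda^{-n}\|A^n\|\,\|y\|\to0$.
\item Since $K$ is generating, $\lambda^{-n}B^nx\to0$ for every $x\in X$. Here any decomposition $x=y-z$ suffices, with no norm control.
\item The uniform boundedness principle then gives $C_\lambda:=\sup_n\lambda^{-n}\|B^n\|<\infty$, hence $r(B)\le\lim_n C_\lambda^{1/n}\lambda=\lambda$.
\item Letting $\lambda\downarrow r(A)$ gives the claim.
\end{itemize}
This route replaces your hand-built Krein--\v{S}mulian argument with an off-the-shelf theorem; both ultimately rest on Baire category. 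Your route, in return, gives the stronger comparison $\|B^n\|\le\gamma M\|A^n\|$ with a constant uniform in $n$.
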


\subsection{ Resolvent Positive Operators } 

%Likely just omit this statement
%This is the relevant condition to ensure that the flow produced by the map $T$ under $\dot{x}=Tx$ preserves the cone $K$. 
%Here Think about showing the relevent semigroup condition? 
% Refer back to Arendt, Batty, others for theory on Resolvent Positive Operators

Let $X$ be a Banach space ordered by a cone $K$, and let $A$ be closed linear operator with dense domain in $\X$. $A$ is said to be \textbf{resolvent positive} if there is a ray $(\omega,\infty)\subset \rho(A)$ so that $(\lambda - A)^{-1}$ is  a cone-preserving operator for all $\lambda\in(\omega,\infty)$. 

If $A$ is the generator of a strongly continuous ($C_0$) semigroup, then the semigroup preserves the cone if $A$ is resolvent positive. This can be proved by invoking the following representation result for strongly continuous semigroups:

\begin{theorem} (Theorem 8.3 in chapter 1 of \cite{Pazy}) If $A$ is the infinitesimal generator of a $C_0$ semigroup $T(t)$ acting on a Banach space $\X$, then for all $t>0$ and all $x\in X$
	$$ T(t)x = \lim_{n\to\infty} \left[ \frac{n}{t}\left(\frac{n}{t} - A \right)^{-1}  \right]^n x .
	$$
This limit is uniform with respect to $t$ on any bounded interval in $(0,\infty)$. 
\end{theorem}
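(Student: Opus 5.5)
The plan is to represent the resolvent powers as integrals of the semigroup against a Gamma-type probability kernel. Strong continuity of $T(\cdot)$, together with the fact that this kernel concentrates at $s=t$ as $n\to\infty$, will then give the limit. Throughout I use the standard growth bound $\|T(s)\|\le Me^{\omega s}$ with $\omega\ge 0$, which holds for every $C_0$ semigroup.

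\textbf{Step 1 (integral formula).} For $\lambda>\omega$ the resolvent is the Laplace transform $(\lambda-A)^{-1}x=\int_0^\infty e^{-\lambda s}T(s)x\,ds$. Differentiating $n-1$ times in $\lambda$, and using $\frac{d^{k}}{d\lambda^{k}}(\lambda-A)^{-1}=(-1)^k k!(\lambda-A)^{-(k+1)}$, I get
$$\bigl[\lambda(\lambda-A)^{-1}\bigr]^n x=\int_0^\infty g_{n,\lambda}(s)\,T(s)x\,ds,\qquad g_{n,\lambda}(s)=\frac{\lambda^n s^{n-1}e^{-\lambda s}}{(n-1)!}.$$
With $\lambda=n/t$, the kernel $g_n:=g_{n,n/t}$ is the Gamma density with mean $t$ and variance $t^2/n$.

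\textbf{Step 2 (uniform boundedness).} From Step 1, $\bigl\|[\tfrac nt(\tfrac nt-A)^{-1}]^n\bigr\|\le M\int g_n(s)e^{\omega s}\,ds=M(1-\omega t/n)^{-n}$. For $t\in(0,t_0]$ and $n>2\omega t_0$ this is at most $Me^{2\omega t_0}$, say. So these operators, and likewise $T(t)$ for $t\le t_0$, are uniformly bounded on the interval.

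\textbf{Step 3 (convergence on $D(A)$).} Fix $x\in D(A)$. Since $g_n$ is a probability density,
$$\Bigl[\tfrac nt(\tfrac nt-A)^{-1}\Bigr]^n x-T(t)x=\int_0^\infty g_n(s)\bigl(T(s)x-T(t)x\bigr)\,ds.$$
The integrand satisfies $\|T(s)x-T(t)x\|\le |s-t|\,Me^{\omega\max(s,t)}\|Ax\|$. Applying Cauchy--Schwarz with respect to the measure $g_n\,ds$ bounds the error by
$$M\|Ax\|\Bigl(\int g_n|s-t|^2\Bigr)^{1/2}\Bigl(\int g_n e^{2\omega(s+t)}\Bigr)^{1/2}\le C(t_0)\,\|Ax\|\,\frac{t_0}{\sqrt n}.$$
Here the second factor is controlled exactly as in Step 2 (now with $2\omega$), provided $n>4\omega t_0$. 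This bound tends to $0$ uniformly for $t\in(0,t_0]$.

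\textbf{Step 4 (density).} For general $x\in X$ and $\varepsilon>0$, choose $y\in D(A)$ with $\|x-y\|<\varepsilon$. The uniform bounds from Step 2 give
$$\text{error}(x)\le 2Me^{2\omega t_0}\varepsilon+\text{error}(y),$$
uniformly in $t\le t_0$, and Step 3 handles $\text{error}(y)$. Intervals of the form $[a,b]\subset(0,\infty)$ are covered by taking $t_0=b$.

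The main obstacle is Step 3: obtaining a rate that is uniform in $t$ despite the exponential growth factor $e^{\omega s}$ on the heavy tail of $g_n$. The Cauchy--Schwarz split above is how I would handle it, and the computation of $\int g_n e^{cs}\,ds=(1-ct/n)^{-n}$ is what makes it close.
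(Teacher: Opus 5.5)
The paper gives no proof of this statement; it is quoted from Pazy \cite{Pazy}, so the comparison here is with the standard proof there. Your argument is correct. It begins as Pazy's does, with the Laplace-transform identity that writes $\bigl[\tfrac nt(\tfrac nt-A)^{-1}\bigr]^n x$ as an average of $T(s)x$ against a Gamma density concentrating at $s=t$. The two proofs differ in how they exploit that concentration.

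Pazy works with an arbitrary $x\in X$ directly. After substituting $s=t\nu$, he splits the error integral into two kinds of region:
\begin{itemize}
\item a window $a\le\nu\le b$ around $\nu=1$, where uniform continuity of the orbit on compacts gives $\|T(t\nu)x-T(t)x\|<\varepsilon$ uniformly in $t\le t_0$;
\item two tails, which decay geometrically in $n$ because $\nu e^{-\nu}$ has a strict maximum $e^{-1}$ at $\nu=1$, combined with an elementary bound such as $n^n/n!\le e^n$.
\end{itemize}

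You instead restrict to the core $D(A)$, where orbits are Lipschitz with a locally uniform constant. You replace the tail analysis by two exact moment computations, the variance $t^2/n$ and $\int g_n e^{cs}\,ds=(1-ct/n)^{-n}$, joined by Cauchy--Schwarz. You then pass to all of $X$ via density of $D(A)$ and the equiboundedness from Step 2.

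Your route gives an explicit rate $O(\|Ax\|\,n^{-1/2})$ on $D(A)$, uniform for $t\in(0,t_0]$, and needs no asymptotics for $n!$. Pazy's route works pointwise for every $x$ using only continuity of the orbit, so it needs no density argument, but it gives no rate.

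The one step you should write out is the differentiation under the integral sign in Step 1. This is routine by dominated convergence on compact subsets of $(\omega,\infty)$.
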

Note for any $t>0$, $\left(\frac{n}{t} - A \right)^{-1}$ is cone-preserving for all sufficiently large $n$ when $A$ is resolvent positive. Consequently, $\left[ \frac{n}{t}\left(\frac{n}{t} - A \right)^{-1}  \right]^n$ is cone-preserving for all sufficiently large $n$. Since $\K$ is closed, it follows that $T(t)x$ belongs to $\K$ when $x\in\K$.

We will next show that the converse holds as well; that is, cone-preserving $C_0$ semigroups have resolvent positive infinitesimal generators. To see why, assume $A$ is the infinitesimal generator of a $C_0$ semigroup $T(t)$. Then there exists $M\geq1$ and $\omega\in\R$ such that 
	$$  ||T(t)|| \leq Me^{\omega t}
	$$
for all $t\geq0$ (by Theorem 2.2 in \cite{Pazy}). 
Furthermore if $\lambda>\omega$, then $\lambda\in\rho(A)$, and we have the following representation formula for the resolvent of $A$ with respect to $\lambda$:
	$$(\lambda-A)^{-1}x = \int_0^\infty e^{-\lambda t}T(t)xdt
	$$
for all $x\in\X$ (see Theorem 5.3 in chapter 1 of \cite{Pazy} for a more details). It follows that whenever $T(t)$ is cone-preserving for all $t\geq0$, then $(\lambda-A)^{-1}x \in\K$ for all $x\in\K$ and all $\lambda>\omega$, using the fact that $\K$ is positively homogeneous and closed. Recall that the infinitesimal generator of a $C_0$ semigroup is a closed, densely defined operator. Therefore $A$ is resolvent positive when $T(t)$ is a cone-preserving semigroup.

Cone-preserving bounded linear operators are particularly simple examples of resolvent-positive operators. To see this, consider the representation of the resolvent of $A$ as a Neumann series: 
	$$ (\lambda-A)^{-1} = \sum_{n=0}^\infty \lambda^{-n-1}A^n,
	$$
which is valid for $\lambda>r(A)$.

\subsection{Convexity and Monotonicity of $r(F(\lambda-T^{-1})$}

Next we state a special case of Theorem 1.1 in \cite{RThiemeHorst1998Rorp}, regarding the behavior of the spectral radius map $\lambda\to r(F(\lambda-T)^{-1})$, which is tailored to our needs. 

% See if it is more fluid to give this statement, or to introduce C as a positive perturber of B. 
% I still have this issue about why Thieme only stated log convexity for the final case in his Theorem, when it is clear that it does not require $s(A)> s(B)$ for logconvexity to follow from Kato's Theorem applied to this problem. 

\begin{theorem}\label{thiemerorpthm} (Theorem 1.1 in \cite{RThiemeHorst1998Rorp}) Let $X$ be an ordered Banach space with a generating and normal cone $K$. Suppose $A$ is a resolvent positive operator with domain $D(A)$, and $A$ splits as $A=B+C$, where $B$ is a resolvent positive operator with domain $D(B)=D(A)$, $s(B)<0$, and $C$ maps $D(B)\bigcap K$ into $K$. Then $r(C(\lambda- B)^{-1})$ is a non-increasing, convex function of $\lambda\in(s(B),\infty)$, and exactly one of the following cases holds: 

1. $r(C(\lambda- B)^{-1})<1$ for all $\lambda > s(B)$. In this case, $s(A)=s(B)$.

2. There exist $\lambda_1 < \lambda_2$ in $(s(B),\infty)$ such that $r(C(\lambda_1- B)^{-1})\geq1$ and $r(C(\lambda_2- B)^{-1})<1$. In this case, $s(A)>s(B)$, and furthermore $r(C(s(A)I - B)^{-1})=1$. 
\end{theorem}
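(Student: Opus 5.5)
Writing $\phi(\lambda):=r(C(\lambda-B)^{-1})$ for $\lambda>s(B)$, my plan for Theorem \ref{thiemerorpthm} has three parts: the qualitative properties of $\phi$ (monotonicity and convexity), the dichotomy itself, and the identification of $s(A)$ in each case.

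\textbf{Monotonicity.} By the resolvent identity,
$$(\mu-B)^{-1}-(\lambda-B)^{-1}=(\lambda-\mu)(\mu-B)^{-1}(\lambda-B)^{-1}.$$
For $s(B)<\mu<\lambda$ the right-hand side is cone-preserving. This uses positivity of $(\mu-B)^{-1}$ on all of $(s(B),\infty)$, which I would get from analytic continuation of the Laplace-type representation together with closedness of $K$. Hence $C(\mu-B)^{-1}\geq C(\lambda-B)^{-1}\geq 0$, and Theorem \ref{ordersprad} gives $\phi(\mu)\geq\phi(\lambda)$.

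\textbf{Convexity.} I would first show that $\lambda\mapsto C(\lambda-B)^{-1}$ is a completely monotone operator family. Its $n$-th derivative is $(-1)^n n!\,C(\lambda-B)^{-n-1}$, so every $(-1)^n\frac{d^n}{d\lambda^n}$ is cone-preserving. Such a family is a Laplace transform, i.e. a positive combination of families $e^{-\lambda t}P_t$ with $P_t\geq0$, and these are log-convex in $\lambda$. Kingman's theorem \cite{KINGMANJ.F.C.1961ACPO}, in its Banach-cone form (Kato \cite{KatoTosio1982Sots}), says the spectral radius of a superconvex positive family is log-convex. Log-convexity of $\phi$ then implies convexity.

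\textbf{The dichotomy.} Since $\phi$ is non-increasing, the two cases in the statement are exactly "$\phi<1$ everywhere" and "$\phi$ crosses below $1$". They are mutually exclusive by construction. They are also exhaustive, provided I show $\phi(\lambda)\to0$ as $\lambda\to\infty$, or at least that $\phi$ eventually drops below $1$. For that I would use resolvent positivity of $A$: for large $\lambda$,
$$(\lambda-A)^{-1}=(\lambda-B)^{-1}\sum_n \bigl[C(\lambda-B)^{-1}\bigr]^n,$$
whose convergence should force $\phi(\lambda)<1$.

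\textbf{Relating $\phi$ to $s(A)$.} This is the main obstacle, and the key identity is
$$\lambda-A=\bigl(I-C(\lambda-B)^{-1}\bigr)(\lambda-B).$$
If $\phi(\lambda)<1$, the Neumann series shows that $\lambda\in\rho(A)$ with $(\lambda-A)^{-1}\geq0$. Conversely, if $\lambda>s(A)$ and $\lambda\in\rho(A)$ with positive resolvent, then $(\lambda-B)^{-1}\leq(\lambda-A)^{-1}$. I would then argue $\phi(\lambda)<1$ via the positivity of $(\lambda-B)(\lambda-A)^{-1}=\sum$ of powers. Concretely, I would use Theorem \ref{spradinsp} (in the form $r\in\sigma$ for positive operators): if $\phi(\lambda)=1$, then $1\in\sigma(C(\lambda-B)^{-1})$, which should put $\lambda\in\sigma(A)$.

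\textbf{Conclusion.} With these pieces, $\{\lambda>s(B):\phi(\lambda)<1\}=(s(A),\infty)\cap(s(B),\infty)$. Here I use that resolvent positive operators satisfy $s(A)\in\sigma(A)$ (when finite) and positivity of the resolvent on $(s(A),\infty)$. In Case 1 this gives $s(A)\leq s(B)$, and the comparison $(\lambda-B)^{-1}\leq(\lambda-A)^{-1}$ gives $s(A)\geq s(B)$. In Case 2, $s(A)>s(B)$, and $\phi(s(A))=1$ follows from continuity of the convex function $\phi$ on the open interval, together with $\phi\geq1$ at $s(A)$ and $\phi<1$ to its right. The delicate points are justifying the spectral-radius-in-spectrum step for the unbounded $A$ and showing that $1\in\sigma(C(\lambda-B)^{-1})$ forces $\lambda\in\sigma(A)$. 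For the latter I would use normality of the cone to get uniform bounds on the partial Neumann sums as $\lambda\downarrow s(A)$.
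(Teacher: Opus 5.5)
The paper gives no proof of this statement; it is quoted from Thieme. So I judge your outline on its own. Monotonicity, convexity and your final bookkeeping are fine in outline. However, the step everything hinges on is not proved:
\[
\lambda>\max\{s(A),s(B)\}\ \Longrightarrow\ \phi(\lambda)<1 .
\]
You need this in two places. First, for exhaustiveness of the dichotomy, since the scenario $\phi\ge 1$ on all of $(s(B),\infty)$ must be excluded. Second, for the inclusion $(s(A),\infty)\cap(s(B),\infty)\subseteq\{\phi<1\}$ in your set identity.

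\textbf{Why your arguments do not give it.} Your exhaustiveness argument is circular. The expansion $(\lambda-A)^{-1}=(\lambda-B)^{-1}\sum_n[C(\lambda-B)^{-1}]^n$ is available only once you know the series converges, which is essentially $\phi(\lambda)<1$. Resolvent positivity of $A$ gives you a positive inverse, not a convergent series. (For unbounded $C$, $\phi$ need not tend to $0$ at all.) Your other argument only rules out $\phi(\lambda)=1$ for $\lambda>s(A)$. It says nothing against $\phi(\lambda)>1$, and continuity of $\phi$ cannot help unless you already know $\phi$ drops below $1$ somewhere.

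Conversely, the point you flag as delicate is trivial. For $\lambda\in\rho(A)\cap\rho(B)$, the operator $I-C(\lambda-B)^{-1}=(\lambda-A)(\lambda-B)^{-1}$ is a composition of bijections $X\to D(A)\to X$. This is exactly Lemma \ref{rorpprop}, and no limit $\lambda\downarrow s(A)$ is involved.

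\textbf{The missing idea: use normality at a fixed $\lambda$.} Take $\lambda>\max\{s(A),s(B)\}$ and set $P=C(\lambda-B)^{-1}\ge 0$. Then $1\in\rho(P)$, and
\[
(I-P)^{-1}=(\lambda-B)(\lambda-A)^{-1}=I+C(\lambda-A)^{-1}\ge 0,
\]
because $(\lambda-A)^{-1}$ maps $K$ into $D(B)\cap K$. Since
\[
(I-P)^{-1}-\sum_{k=0}^{n}P^k=P^{n+1}(I-P)^{-1}\ge 0,
\]
you get $0\le P^n x\le (I-P)^{-1}x$ for all $x\in K$.

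Now use normality together with a bounded decomposition $x=y-z$, $y,z\in K$, $\|y\|+\|z\|\le M\|x\|$; this exists because $K$ is closed and generating. It yields $\sup_n\|P^n\|<\infty$, hence $r(P)\le 1$. By Theorem \ref{spradinsp}, $r(P)\in\sigma(P)$, while $1\in\rho(P)$, so in fact $r(P)<1$. Taking $\lambda$ large gives exhaustiveness, and in general you get the missing inclusion. With this lemma your conclusion paragraph goes through.

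\textbf{Two smaller remarks.}

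First, positivity of $(\mu-B)^{-1}$ on all of $(s(B),\infty)$ and the fact $s(B)\in\sigma(B)$ are Arendt's results. They come from the Taylor expansion of the resolvent, not from a Laplace representation, since $B$ need not generate a semigroup.

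Second, for convexity you need no operator-valued Bernstein representation, and in a general ordered Banach space you may not have one. What Kato's argument uses is that every power $[C(\lambda-B)^{-1}]^n$ is again completely monotone, by the Leibniz rule. Hence the scalar functions $f([C(\lambda-B)^{-1}]^n x)$, for $x\in K$ and $f\in K^*$, are log-convex. Normality then transfers this, via Gelfand's formula, to $\phi$.
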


%Note that if $\hat{A}$ is a cone-preserving bounded linear operator that splits as $\hat{A}=T+F$, with $T$ and $F$ bounded, cone-preserving, and $r(T)<1$, then one may take $A=\hat{A}-I$, $B=T-I$, and $C=F$ in the statement of the previous Theorem. One can now check that $A$, $B$, and $C$ satisfy the conditions of Theorem \ref{thiemerorpthm}. We can then conclude that the map $\lambda\to r(F(\lambda - (T-I))^{-1})$ from $(r(T)-1,\infty)$ to $\R$ is a non-increasing and convex function of $\lambda$, and either 1 or 2 holds. Making the substitution $\mu= \lambda+1$, we have the same conclusion for the map $\mu\to r(F(\mu-T)^{-1})$ from $(r(T),\infty)$ to $\R$. Using this, proving Theorem \ref{thiemetrich} is a simple exercise in considering the three cases $R_0<1$, $R_0=1$, and $R_0>1$. 

Next we translate this result into a version which applies to bounded, cone-preserving linear operators.

\begin{corollary}\label{greggzrorpcor}  Let $X$ be an ordered Banach space with a generating and normal cone $K$. Suppose $\hat{A}$ is a bounded, cone-preserving linear operator on $\X$. Furthermore suppose that $\hat{A}$ splits as $\hat{A}=T+F$ where $T$ and $F$ are bounded, cone-preserving operators with $r(T)<1$. Then  $r(F(\lambda- T)^{-1})$ is a non-increasing, convex function of $\lambda\in(r(T),\infty)$, and exactly one of the following cases holds: 

1. $r(F(\lambda- T)^{-1})<1$ for all $\lambda > r(T)$. In this case, $r(\hat{A})=r(T)$.

2. There exists $\lambda_1 < \lambda_2$ in $(r(T),\infty)$ such that $r(F(\lambda_1- T)^{-1})\geq1$ and $r(F(\lambda_2- T)^{-1})<1$. In this case, $r(\hat{A})>r(T)$, and furthermore $r(F(r(\hat{A})I - T)^{-1})=1$. 
\end{corollary}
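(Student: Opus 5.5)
We will deduce the corollary from Theorem \ref{thiemerorpthm} by shifting the operators. Set $A:=\hat{A}-I$, $B:=T-I$, and $C:=F$, all bounded, so $D(A)=D(B)=\X$ and both are closed and densely defined. Since $A+I$ and $B+I$ are bounded and cone-preserving, the Neumann series gives, for $\lambda>r(\hat A)-1$,
\[
(\lambda-A)^{-1}=((\lambda+1)-\hat{A})^{-1}=\sum_{n\ge0}(\lambda+1)^{-n-1}\hat{A}^n .
\]
Each term is cone-preserving and $\K$ is closed, so $A$ is resolvent positive. The same argument applies to $B$. The operator $C=F$ maps $\K$ into $\K$ by hypothesis.

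Next we relate spectral abscissas to spectral radii. By Theorem \ref{spmapthm} applied to $p(\mu)=\mu-1$, we have $\sigma(B)=\sigma(T)-1$ and $\sigma(A)=\sigma(\hat A)-1$. This gives $s(B)=s(T)-1$ and $s(A)=s(\hat A)-1$. For a cone-preserving bounded operator $S$ on a space with a generating and normal cone, Theorem \ref{spradinsp} gives $r(S)\in\sigma(S)$. Hence $s(S)\ge r(S)$, while trivially $s(S)\le r(S)$, so $s(S)=r(S)$. Consequently $s(B)=r(T)-1<0$, which is the remaining hypothesis of Theorem \ref{thiemerorpthm}, and $s(A)=r(\hat A)-1$.

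Now apply Theorem \ref{thiemerorpthm}. The identity $C(\lambda-B)^{-1}=F((\lambda+1)-T)^{-1}$ lets us substitute $\mu=\lambda+1$, which maps $(s(B),\infty)$ bijectively onto $(r(T),\infty)$. Non-increasingness and convexity survive this affine reparametrization, so $\mu\mapsto r(F(\mu-T)^{-1})$ is non-increasing and convex on $(r(T),\infty)$. The dichotomy transfers as follows.
\begin{itemize}
\item In case 1, $s(A)=s(B)$, which becomes $r(\hat A)-1=r(T)-1$, i.e.\ $r(\hat A)=r(T)$.
\item In case 2, the points $\lambda_i+1$ play the role of $\lambda_1<\lambda_2$. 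Moreover $s(A)>s(B)$ becomes $r(\hat A)>r(T)$, and
\[
1=r(C(s(A)-B)^{-1})=r\bigl(F((r(\hat A)-1)+1-T)^{-1}\bigr)=r(F(r(\hat A)-T)^{-1}).
\]
\end{itemize}
Exclusivity of the two cases is inherited directly.

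The main obstacle is the identification $s(S)=r(S)$ for $S\in\{T,\hat A\}$. This identification is what converts statements about spectral abscissas into statements about spectral radii, and it relies essentially on Theorem \ref{spradinsp} and hence on the cone being generating and normal. The resolvent-positivity checks and the reparametrization are routine.
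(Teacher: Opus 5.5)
Your proposal is correct and follows the paper's proof essentially step for step. It uses the same shift $A=\hat A-I$, $B=T-I$, $C=F$, the same identification $s(S)=r(S)$ via Theorem \ref{spradinsp} and the spectral mapping theorem, and the same substitution $\mu=\lambda+1$; your explicit Neumann-series argument for resolvent positivity simply spells out what the paper takes from its earlier remark on cone-preserving bounded operators.
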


\textit{Proof.} We show that one may take $A=\hat{A}-I$ and $B=T-I$ with $D(A)=D(B)=\X$, and $C=F$ in the statement of Theorem \ref{thiemerorpthm}. Note that $s(T)=r(T)$, as $r(T)\in\sigma(T)$ by Theorem \ref{spradinsp}. Consequently, $s(B)=s(T-I)=s(T)-1$ by the spectral mapping theorem (Theorem \ref{spmapthm}). Hence $s(B)=r(T)-1$. Furthermore, if $\lambda>s(B)$, then $\lambda+1>r(T)$, and 
	$$(\lambda-B)^{-1}=(\lambda-(T-I))^{-1}=((\lambda+1)-T)^{-1}
	$$
is cone-preserving because $T$ is cone-preserving (and therefore resolvent positive). This shows that $B$ is resolvent positive. The same argument applied to $A=\hat{A}-I$ shows that $A$ is also resolvent positive. Note also that $s(B)=r(T)-1$ is negative since $r(T)<1$. Since $C=F$ is cone-preserving by assumption, $C$ maps $D(B)\bigcap \K=\K\bigcap\K=\K$ into $\K$. 

Thus we may apply Theorem \ref{thiemerorpthm}. Throughout we will further use the fact that $s(T-I)=r(T)-1$ and $s(\hat{A}-I)=r(\hat{A})-1$, and make the substitution $\mu=\lambda+1$. First, we conclude $r(F(\lambda-(T-I))^{-1})$ for $\lambda\in(s(T-I),\infty)$ is a non-increasing, convex function of $\lambda$, so equivalently $r(F(\mu-T)^{-1})$ for $\mu\in(r(T),\infty)$, is a non-increasing, convex function of $\mu$. 

In case 1, $r(F(\lambda- (T-I))^{-1})<1$ for all $\lambda > s(T-I)$ is equivalent to $r(F(\mu-T)^{-1})<1$ for all $\mu>r(T)$. In this case, $s(\hat{A}-I)=s(T-I)$ implies $r(\hat{A})-1=r(T)-1$, and $r(\hat{A})=r(T)$. 

In case 2, the existence of $\lambda_1 < \lambda_2$ in $(s(T-I),\infty)$ such that $r(F(\lambda_1- (T-I))^{-1})\geq1$ and $r(F(\lambda_2- (T-I))^{-1})<1$ is equivalent to the existence of $\mu_1 < \mu_2$ in $(r(T),\infty)$ such that $r(F(\mu_1-T)^{-1})\geq1$ and $r(F(\mu_2-T)^{-1})<1$. In this case, $s(\hat{A}-I)>s(T-I)$ implies $r(\hat{A})>r(T)$. Finally, $r(F[s(\hat{A}-I)I - (T-I)]^{-1})=1$ implies that 
	$$r(F[(r(\hat{A})-1)I +I -T]^{-1})=r(F(r(\hat{A})I-I+I-T)^{-1})=r(F(r(\hat{A})I-T)^{-1})=1. \text{ }\square
	$$

%
%We will treat the statement that $r(C(\lambda- B)^{-1})$ is a non-increasing, convex function, case 1, and case 2 separately. First, we conclude $r(F(\lambda-(T-I))^{-1})$, equivalently $r(F((\lambda+1)-T)^{-1})$ is a non-increasing, convex function of $\lambda\in(s(T-I),\infty)$. 
%
%
%1. $r(F(\lambda- (T-I))^{-1})<1$ for all $\lambda > s(T-I)$. In this case, $s(\hat{A}-I)=s(T-I)$.
%
%2. There exists $\lambda_1 < \lambda_2$ in $(s(T-I),\infty)$ such that $r(F(\lambda_1- (T-I))^{-1})\geq1$ and $r(F(\lambda_2- (T-I))^{-1})<1$. In this case, $s(\hat{A}-I)>s(T-I)$, and furthermore $r(F(s(\hat{A}-I)I - (T-I))^{-1})=1$. 
%

We will also need the following result:

\begin{lemma}\label{rorpprop} (Proposition 4.3 in \cite{RThiemeHorst1998Rorp}) Let $\X$, $\K$, $A$, $T$, and $F$, with $A=T+F$, satisfy the assumptions in Corollary \ref{greggzrorpcor}.  Let $\lambda\in\rho(T)$. Then

	$\lambda\in\rho(A)$ if and only if $1\in\rho( F(\lambda-T)^{-1} )$. Moreover in this case, 
		$$ (\lambda- A)^{-1} = (\lambda - T)^{-1}(I- F(\lambda-T)^{-1})^{-1}.
		$$
\end{lemma}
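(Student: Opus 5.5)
The plan is to prove the resolvent identity first, as an algebraic factorization, and to get both directions of the equivalence from it. Since $\lambda\in\rho(T)$ and all operators are bounded with domain $\X$, I would write
$$\lambda - A = \lambda - T - F = \bigl(I - F(\lambda-T)^{-1}\bigr)(\lambda - T).$$
This is checked directly: $(I - F(\lambda-T)^{-1})(\lambda-T) = (\lambda - T) - F$. Everything here is taken on the complexification, so $\lambda$ may be complex. Note also that $1\in\rho(F(\lambda-T)^{-1})$ means exactly that $I - F(\lambda-T)^{-1}$ is a bijection of $\X$ onto itself.

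For the direction ($\Leftarrow$), suppose $1\in\rho(F(\lambda-T)^{-1})$. Then $\lambda - A$ is the composition of two bijections $\X\to\X$, so it is a bijection and $\lambda\in\rho(A)$. Inverting the factorization gives
$$(\lambda-A)^{-1} = (\lambda-T)^{-1}\bigl(I - F(\lambda-T)^{-1}\bigr)^{-1},$$
which is the claimed formula.

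For the direction ($\Rightarrow$), suppose $\lambda\in\rho(A)$. From the factorization,
$$I - F(\lambda-T)^{-1} = (\lambda - A)(\lambda-T)^{-1},$$
which is again a composition of two bijections of $\X$. So $I - F(\lambda-T)^{-1}$ is invertible, with inverse $(\lambda-T)(\lambda-A)^{-1}$, and hence $1\in\rho(F(\lambda-T)^{-1})$.

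I see no real obstacle. The only care needed is bookkeeping: the factor $(\lambda-T)$ must sit on the right, and both sides must be interpreted on the complexification. Unlike in Thieme's unbounded setting, there are no domain issues, because all operators are bounded and everywhere defined. Bounded inverses come for free from boundedness, or from the closedness remark earlier in the paper. The cone hypotheses are not needed for this algebraic statement.
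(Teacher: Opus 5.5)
Your proof is correct and follows the paper's argument: the same factorization $\lambda - A = (I - F(\lambda-T)^{-1})(\lambda - T)$, with the equivalence and the resolvent formula read off from it. You spell out both directions more carefully than the paper does, which states the equivalence in one line. In particular, you use the identity $I - F(\lambda-T)^{-1} = (\lambda-A)(\lambda-T)^{-1}$ for the forward direction.
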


\textit{Proof.} This proposition is easily seen by simple algebra:
\begin{align*}
	\lambda- A  &= \lambda - T - F  \\
			&= I(\lambda-T) - F(\lambda-T)^{-1}(\lambda-T)\\
			&=  (I- F(\lambda-T)^{-1})(\lambda - T).  \\
\end{align*}
Then 
	$$ (\lambda- A)^{-1} =(\lambda - T)^{-1}(I- F(\lambda-T)^{-1})^{-1}
	$$
exactly when $\lambda\in\rho(A)$, and exactly when $1\in\rho(F(\lambda-T)^{-1})$. $\square$

\section{ Results }

% Do we need this, or is it sufficient that our spectral radius function is strictly decreasing at 1? 
%\subsection{ Lemma 1 Spectral Radius Function goes to 0 } 

% Make sure to use the term trichotomy to refer to Li \& Schneider's result so that the language here makes sense
To prove the trichotomy for $R_0$ in the matrix case, Li and Schneider first show that if $R_0>0$, then the spectral radius of $T+\frac{1}{R_0}F$ is 1. Here we show that in the infinite dimensional case, the assumptions of Theorem \ref{lischneidertrich} still guarantee that this property holds.

\subsection{$T+\frac{1}{R_0}F$ has spectral radius 1}

\begin{lemma}\label{spisone} Let $\X$ be a Banach space ordered by a generating and normal cone $\K$. Let $A$ be a bounded linear operator that preserves $\K$, and suppose $A$ is split as $A=T+F$, where $r(T)<1$ and $T$ and $F$ are also $\K$-preserving bounded linear operators. Define $R_0:= r(F(I-T)^{-1})$, and suppose $R_0>0$. Then $T+\frac{1}{R_0}F$ has spectral radius 1. 
\end{lemma}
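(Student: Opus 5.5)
We apply Corollary \ref{greggzrorpcor} to the operator $\hat{A}:=T+\frac{1}{R_0}F$, split as $T+F'$ with $F':=\frac{1}{R_0}F$. Since $R_0>0$, $F'$ is a bounded cone-preserving operator, and $\hat{A}$ is bounded and cone-preserving as a sum of such operators. The hypotheses $r(T)<1$ and that $\K$ is generating and normal are inherited. The key observation is that, by Corollary \ref{spmapconsq}, for every $\lambda>r(T)$
$$ r\big(F'(\lambda-T)^{-1}\big)=\tfrac{1}{R_0}\, r\big(F(\lambda-T)^{-1}\big), $$
and in particular at $\lambda=1$ this equals $\frac{1}{R_0}R_0=1$.

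Next we decide which case of Corollary \ref{greggzrorpcor} occurs. Case 1 requires $r(F'(\lambda-T)^{-1})<1$ for all $\lambda>r(T)$. This fails at $\lambda=1$, which lies in $(r(T),\infty)$ because $r(T)<1$. Hence case 2 must hold, and the corollary gives both $r(\hat{A})>r(T)$ and
$$ r\big(F'(r(\hat{A})-T)^{-1}\big)=1. $$

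It remains to conclude $r(\hat{A})=1$. Set $\phi(\lambda):=r(F'(\lambda-T)^{-1})$ on $(r(T),\infty)$. By the corollary, $\phi$ is non-increasing and convex. Case 2 also supplies some $\lambda_2$ with $\phi(\lambda_2)<1$. We know $\phi(1)=1$ and $\phi(r(\hat{A}))=1$, and we must show the level set $\{\phi=1\}$ is a single point.

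This is the main obstacle, since a non-increasing function may be constant on an interval. We rule this out using convexity. Let $\mu_0:=\min(1,r(\hat{A}))$ and $\mu_1:=\max(1,r(\hat{A}))$. Because $\phi$ is non-increasing, $\phi(\lambda_2)<1=\phi(\mu_1)$ forces $\lambda_2>\mu_1$.

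Suppose, for contradiction, that $\mu_0<\mu_1$. Then $\phi(\mu_0)=\phi(\mu_1)=1$ and $\phi(\lambda_2)<1$ with $\mu_0<\mu_1<\lambda_2$. Convexity applied to these three points requires
$$ \phi(\mu_1)\le \frac{\lambda_2-\mu_1}{\lambda_2-\mu_0}\,\phi(\mu_0)+\frac{\mu_1-\mu_0}{\lambda_2-\mu_0}\,\phi(\lambda_2). $$
The right-hand side is a convex combination of $1$ and a number strictly less than $1$, with strictly positive weight $\frac{\mu_1-\mu_0}{\lambda_2-\mu_0}$ on $\phi(\lambda_2)$, so it is strictly less than $1=\phi(\mu_1)$. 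This is a contradiction.

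Therefore $\mu_0=\mu_1$, that is, $r(\hat{A})=1$, which is the claim.
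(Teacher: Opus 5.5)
Your proof is correct. It differs from the paper's in how $r(\hat A)$ is connected to the function $\phi$.

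The paper takes only three things from Corollary \ref{greggzrorpcor}: monotonicity, convexity, and the existence of some $\lambda_2$ with $\phi(\lambda_2)<1$. It links the spectrum of $A'=T+\frac{1}{R_0}F$ to $\phi$ through the factorization $\lambda-A'=(I-\frac{1}{R_0}F(\lambda-T)^{-1})(\lambda-T)$ from Lemma \ref{rorpprop}. Combined with Theorem \ref{spradinsp}, this gives $1\in\sigma(A')$, hence $r(A')\ge 1$, with no convexity needed. If $r(A')>1$, the same factorization gives $\phi(r(A'))\ge 1$, so $\phi(r(A'))=1$ by monotonicity. The paper then rules out $r(A')>1$ with the same three-point convexity contradiction you use.

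You instead invoke the final assertion of case 2, $\phi(r(\hat A))=1$, directly. This makes Lemma \ref{rorpprop} unnecessary. It also excludes $r(\hat A)<1$ and $r(\hat A)>1$ in one stroke, via your observation that a non-increasing convex function that eventually drops below $1$ can take the value $1$ at most once.

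Your route is shorter and symmetric, but it relies on the deepest part of Thieme's theorem, the identity $r(C(s(A)-B)^{-1})=1$. The paper's route uses only the qualitative shape of $\phi$ plus an elementary resolvent identity, and it obtains the lower bound $r(A')\ge 1$ by a purely spectral argument.
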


\textit{Proof.} Let $A':= T+\frac{1}{R_0}F$. We begin by showing $r(A')\geq 1$. For $\lambda>r(T)$, note that 
    $$ (\lambda - A') = (I- \frac{1}{R_0}F(\lambda-T)^{-1})(\lambda - T)
    $$
and that $\lambda\in\rho(A') \iff 1\in\rho\left(\frac{1}{R_0}F(\lambda-T)^{-1}\right)$ from Lemma \ref{rorpprop}. Consequently, 
	\begin{align*}\lambda\in\sigma(A') \iff 1\in\sigma\left(\frac{1}{R_0}F(\lambda-T)^{-1}\right). \label{spectequiv}\tag{$*$}
	\end{align*}
Since $R_0=r\left(F(I-T)^{-1}\right)$ by definition, Corollary \ref{spmapconsq} and Theorem \ref{spradinsp} imply that 
	$$1=r\left(\frac{1}{R_0}F(I-T)^{-1}\right)\in\sigma\left(\frac{1}{R_0}F(I-T)^{-1}\right).
	$$ 
Note this is the case where $\lambda=1$ from (\ref{spectequiv}), and so it follows that $\lambda=1\in\sigma(A')$. This implies that $r(A')\geq1$.

For brevity define 
	$$\tilde{r}(\lambda):= r\left(\frac{1}{R_0}F(\lambda-T)^{-1}\right)
	$$ 
for $\lambda$ in the domain $(r(T),\infty)$ and recall $\tilde{r}(1)=1$ from above. Next we show $r(A')=1$ by contradicting the fact that $\tilde{r}(\lambda)$ is convex by Corollary \ref{greggzrorpcor}. Suppose that $r(A')>1$. Then there exists $\lambda_1>1$ such that $\lambda_1\in\sigma(A')$ (note we may always take $\lambda_1=r(A')\in\sigma(A')$ by applying Theorem \ref{spradinsp}). It follows that that $1\in\sigma\left(\frac{1}{R_0}F(\lambda_1-T)^{-1}\right)$ by (\ref{spectequiv}). It follows that $\tilde{r}(\lambda_1)\geq1$. However, by Corollary \ref{greggzrorpcor}, since $\tilde{r}(\lambda)$ is a non-increasing function, and since $\tilde{r}(1)=1$, it must be that $\tilde{r}(\lambda_1)$=1.

Note that we are necessarily in case 2 of Corollary \ref{greggzrorpcor}, and thus there exists $\lambda_2>\lambda_1$ with $\tilde{r}(\lambda_2)<1$. Let $t\in(0,1)$ such that $1t+\lambda_2(1-t)=\lambda_1$. Using convexity of $\tilde{r}(\lambda)$ and that fact $\tilde{r}(\lambda_2)<1$, we have the following contradiction:
\begin{align*}
	1 = \tilde{r}(\lambda_1) \leq t \tilde{r}(1) + (1-t)\tilde{r}(\lambda_2) = t + (1-t)\tilde{r}(\lambda_2)< 1. 
\end{align*}
Consequently, $r(A')$ cannot be greater than 1. Therefore $r(A')$ must be exactly 1.   $\square$

\subsection{ The Nonstrict Trichtomy }

We begin with the most general version of the trichotomy, with nonstrict inequalities.

\begin{theorem}\label{greggordtrich} The Nonstrict Trichotomy for Bounded Operators

% To remove the assumption that $R_0>0$, we simply treat this case separately. If $R_0$ is 0$ then we don't do our argument, we automatically have the bound. 

Let $\X$ be a Banach space ordered by a generating and normal cone $\K$. Let $A$ be a bounded linear operator that preserves $\K$, and splits as $A=T+F$, where $T$ and $F$ are also $\K$-preserving bounded linear operators with $r(T)<1$. Define $R_0:= r(F(I-T)^{-1})$. Then exactly one of the following holds: 

\begin{align*}
    (a) \text{ }R_0 \geq &\text{ } r(A) > 1, \\
    (b) \text{ }R_0 = & \text{ } r(A) = 1, \\
    (c) \text{ }R_0 \leq & \text{ } r(A) < 1.
\end{align*}
\end{theorem}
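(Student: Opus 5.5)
We handle $R_0=0$ separately, and for $R_0>0$ compare $A$ with the rescaled operator $A'=T+\frac{1}{R_0}F$, which has spectral radius exactly $1$ by Lemma \ref{spisone}. The comparison uses Theorem \ref{ordersprad} together with the scaling identity in Corollary \ref{spmapconsq}. The signs of $R_0-1$ and $r(A)-1$ are already tied together by Theorem \ref{thiemetrich}. Since Theorem \ref{thiemetrich} is only quoted from the literature, I would rather rederive that sign agreement from Corollary \ref{greggzrorpcor} and Lemma \ref{rorpprop}. Once the signs match, only the bound between $R_0$ and $r(A)$ remains, and the three cases are then mutually exclusive.

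\textbf{Sign agreement.} Put $g(\lambda)=r(F(\lambda-T)^{-1})$ on $(r(T),\infty)$, so $g(1)=R_0$.
\begin{itemize}
\item In case 1 of Corollary \ref{greggzrorpcor} we have $r(A)=r(T)<1$ and $R_0=g(1)<1$.
\item In case 2 we have $r(A)>r(T)$ and $g(r(A))=1$. Also $g$ is convex and non-increasing and takes a value below $1$ somewhere, so it is strictly decreasing wherever it is $\geq 1$. (If $g(\mu_1)=g(\mu_2)\geq 1$ with $\mu_1<\mu_2$, convexity and monotonicity force $g$ to be constant on $[\mu_1,\infty)$, contradicting $g(\lambda_2)<1$.)
\item Hence in case 2, $R_0>1 \iff 1<r(A)$, $R_0=1\iff r(A)=1$, and $R_0<1\iff r(A)<1$. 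The last equivalence also uses that $g\le 1$ to the right of $r(A)$ and $g\geq 1$ to its left.
\end{itemize}

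\textbf{Magnitude comparison.} If $R_0=0$, then $r(A)\geq 0=R_0$. Also $g(1)=0<1$ rules out $r(A)\ge 1$ (otherwise $g(1)\ge g(r(A))=1$), so we are in (c). Now assume $R_0>0$.
\begin{itemize}
\item If $R_0\geq 1$, then $\frac{1}{R_0}F\leq F$, so $A'\leq A$. Theorem \ref{ordersprad} gives $1=r(A')\leq r(A)$. For the upper bound $r(A)\le R_0$, note $A=T+F\le R_0T+F=R_0A'$ since $R_0\ge1$ and $T$ is cone-preserving. By Theorem \ref{ordersprad} and Corollary \ref{spmapconsq}, $r(A)\le r(R_0A')=R_0\,r(A')=R_0$.
\item If $R_0<1$, then $F\le \frac{1}{R_0}F$ gives $A\le A'$, so $r(A)\le 1$. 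In the other direction, $R_0A'=R_0T+F\le T+F=A$, so $R_0=r(R_0A')\le r(A)$.
\end{itemize}
These inequalities, combined with the sign agreement, give (a), (b) or (c). In particular, $R_0=1$ forces $r(A)=1$, and $R_0<1$ forces $r(A)<1$ strictly.

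The step I expect to need the most care is the sign agreement: strict inequalities cannot come from the ordering arguments alone, which only yield $\le 1$ or $\ge 1$. For that step I would rely on the strict monotonicity of $g$ near level $1$, exactly as in the convexity argument in Lemma \ref{spisone}. The magnitude comparison itself is a direct application of Theorem \ref{ordersprad}.
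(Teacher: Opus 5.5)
Your proof is correct and follows the paper's argument: you treat $R_0=0$ separately, and for $R_0>0$ you compare $A$ with $R_0(T+\frac{1}{R_0}F)$ using Theorem \ref{ordersprad}, Corollary \ref{spmapconsq} and Lemma \ref{spisone}. The only difference is that you rederive the sign agreement of Theorem \ref{thiemetrich} from Corollary \ref{greggzrorpcor}, using strict decrease of $g$ wherever $g\geq 1$, instead of citing it; that derivation is valid and stays within the tools already used for Lemma \ref{spisone}.
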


\textit{Proof.} 

%The proof combines the main ideas of Li and Schneider's proof of this result for nonnegative matrices and Thieme's trichotomy Theorem 3.6.2 for $R_0$ and $r(A)$. 

Let us start by assuming $R_0=0$. Then $r(A)<1$ by Theorem \ref{thiemetrich}, and we clearly have $R_0=0\leq r(A)$ as desired since the spectral radius is always nonnegative. 

Now assume $R_0>0$. By Lemma \ref{spisone}, we have that $r(T+\frac{1}{R_0}F) =1$. Consider 3 cases, $R_0=1$, $R_0>1$, and $0<R_0<1$. From Theorem \ref{thiemetrich}, we already have that when $R_0=1$, $r(A)=1$. All that is left is to show that $R_0\geq r(A)$ when $R_0>1$ and $R_0\leq r(A)$ when $R_0<1$. 

Suppose $R_0>1$. Then 
    $$ R_0(T+\frac{1}{R_0}F) \geq A
    $$
since $(R_0-1)T$ is cone preserving. By Theorem \ref{ordersprad}, it follows that  
    $$ r(R_0(T+\frac{1}{R_0}F)) \geq r(A).
    $$
From Corollary \ref{spmapconsq}, we have 
    $$ R_0 r(T+\frac{1}{R_0}F)\geq r(A).
    $$
Finally by Lemma \ref{spisone} we have that 
    $$ R_0 \geq r(A).
    $$

Conversely, if $0<R_0<1$, the same argument shows that 
    $$ R_0 \leq r(A).  \square
    $$

\subsection{The Strict Trichotomy } 

A version of Theorem \ref{greggordtrich} with strict inequalities can be obtained by imposing further geometric and spectral  assumptions on the operators $A$, $T$, and $F$. First we introduce a number of concepts and results from the literature which will allow us to state these assumptions.

A point $x\in\K$ is said to be an \textbf{almost interior point} if $f(x)>0$ for all $f\in\KK\setminus\{0\}$; see \cite{GlückJochen2020Aipi}. Simiarly, a \textbf{strictly positive functional} $f\in\KK$ is a functional that satisfies $f(x)>0$ whenever $x\in\K\setminus\{0\}$. An bounded, cone-preserving operator $A$ is called \textbf{semi-nonsupporting} if for any pair $(x,f)\in(\K\setminus\{0\})\times(\KK\setminus\{0\})$, there is an $n=n(x,f)$ (which may depend on $x$ and $f$) such that $f(A^nx)>0$. Semi-nonsupporting operators generalize the notion of irreducible matrices. For these operators, we have the following:

\begin{theorem}\label{MarekPFthm} (Theorem 2.2 in \cite{MarekIvo1970FToP} ) Let $\X$ be a Banach space ordered by a generating and normal cone $\K$. Let $A$ be a bounded cone-preserving, semi-nonsupporting operator on a generating and normal cone, and suppose $r(A)$ is a pole of the resolvent map of $A$ (the map from $\rho(A)$ to $\BX$ given by $\lambda\to (\lambda-A)^{-1}$). Then 

\begin{adjustwidth}{1cm}{1cm}
1. $r(A)$ is a simple pole of the resolvent map. \\ 
2. There is a unique eigenvector $x$ in $\K$ (up to positive scaling), which is an almost interior point, corresponding to $r(A)$. Furthermore, if $v\in\K$ is an eigenvector of $A$, $v=cx$ for some positive constant $c$. \\
3. There is a strictly positive eigenfunctional $f$ of $A$ corresponding to $r(A)$.
\end{adjustwidth}
\end{theorem}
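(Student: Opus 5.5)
Write $r:=r(A)$, which lies in $\sigma(A)$ by Theorem \ref{spradinsp}. Suppose $r$ is a pole of order $m\geq 1$. The plan is to work with the leading Laurent coefficient
$$Q:=\lim_{\lambda\downarrow r}(\lambda-r)^m(\lambda-A)^{-1}\neq 0 .$$
For $\lambda>r$ the Neumann series shows that $(\lambda-A)^{-1}$ is cone-preserving. Since $\K$ is closed, $Q$ is cone-preserving. Comparing Laurent coefficients in $(\lambda-A)(\lambda-A)^{-1}=I$ gives $(A-r)Q=Q(A-r)=0$. Hence every nonzero vector in the range of $Q$ is an eigenvector for $r$. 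Because $\K$ is generating, $Q$ cannot vanish on $\K$, so there is $x_0\in\K$ with $x:=Qx_0\in\K\setminus\{0\}$ and $Ax=rx$.

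Dually, $\KK$ is generating because $\K$ is normal (Proposition 19.4 in \cite{Deimling}). The adjoint $Q^*$ preserves $\KK$ and satisfies $A^*Q^*=rQ^*$. So some $f_0\in\KK$ yields an eigenfunctional $f:=Q^*f_0\in\KK\setminus\{0\}$ with $A^*f=rf$.

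Next I show positivity using semi-nonsupportingness. If $g\in\KK\setminus\{0\}$ had $g(x)=0$, then $g(A^nx)=r^ng(x)=0$ for all $n$, contradicting the definition. So $x$ is almost interior. Symmetrically, if $f(y)=0$ for some $y\in\K\setminus\{0\}$, then $f(A^ny)=r^nf(y)=0$ for all $n$, so $f$ is strictly positive.

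Here I take the exponent $n$ in the definition to be $\geq 1$, as in Marek's convention. This also rules out $r=0$: in that case $A^nx=0$ for all $n\geq 1$, which is impossible.

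To prove simplicity (item 1), suppose $m\geq 2$. The Laurent identities give $Q=(A-r)Q'$, where $Q'$ is the coefficient of $(\lambda-r)^{-(m-1)}$. Then for every $y\in\X$,
$$f(Qy)=\big((A^*-r)f\big)(Q'y)=0 .$$
But $Qx_0\in\K\setminus\{0\}$ and $f$ is strictly positive, so $f(Qx_0)>0$, a contradiction. Hence $m=1$. Then $Q$ is the spectral projection, and its range is exactly $\ker(r-A)$.

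For the eigenvector statement in item 2, let $v\in\K\setminus\{0\}$ with $Av=\mu v$. Then $\mu f(v)=f(Av)=(A^*f)(v)=rf(v)$, and $f(v)>0$, so $\mu=r$.

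It remains to show that $\ker(r-A)\cap\K$ is a single ray. Set $u:=v-cx$ with $c=f(v)/f(x)>0$, so that $Au=ru$ and $f(u)=0$. I must show $u=0$, and this is the step I expect to be the main obstacle.

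Since $x$ is only almost interior, not interior, the usual ``$\sup\{t: v-tx\in\K\}$'' trick does not directly produce a boundary eigenvector that could be tested against a vanishing functional.

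My first attempt is the two-dimensional reduction. Assume $u\neq 0$, and consider the plane $E=\operatorname{span}\{x,v\}\subseteq\ker(r-A)$. The set $C:=\K\cap E$ is a closed pointed cone in $E$ containing two independent vectors, so it is a sector spanned by two extreme rays $e_1,e_2\in\ker(r-A)\cap\K$.

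Because $f(u)=0$ and $f$ is strictly positive, $u$ can lie neither in $\K$ nor in $-\K$. I then want a functional $g\in\KK\setminus\{0\}$ with $g(e_1)=0$, which would contradict the almost interiority of the eigenvector $e_1$ proven above. The plan for building $g$ is to separate $e_1$ from $\K$ along a direction leaving $E$, by a Hahn–Banach argument.

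If that separation fails, the fallback is to use the projection $Q$. Write $u=p-q$ with $p,q\in\K$ (possible since $\K$ is generating). Then $u=Qp-Qq$ expresses $u$ as a difference of two positive eigenvectors. From there I would try to show that $Q$ acts as $y\mapsto \frac{f(y)}{f(x)}\,x$ on $\K$, by again applying semi-nonsupportingness to pairs $(Qy,g)$.
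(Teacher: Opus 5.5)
The paper gives no proof of this statement (it is quoted from Marek), so your argument has to stand on its own. Items 1 and 3 are correctly established. So are the existence of an almost interior eigenvector and the fact that any positive eigenvector belongs to $r$. The genuine gap is the remaining claim that $\ker(r-A)\cap K$ is a single ray, which you leave open, and neither of your two plans closes it.

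The Hahn--Banach plan cannot work as formulated, because $K$, the plane $E$ and the extreme ray $e_1$ alone do not force a nonzero $g\in K^*$ with $g(e_1)=0$. For example, in $\ell^2$ with the standard cone, let $a_n=1/n$, $b_n=1/n^2$ for odd $n$ and $a_n=1/n^2$, $b_n=1/n$ for even $n$. Then $K\cap\operatorname{span}\{a,b\}=\{sa+tb: s,t\ge 0\}$, so $a$ and $b$ span the extreme rays of the section. Yet both have strictly positive coordinates and are therefore almost interior.

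The fallback only restates the goal: $Qy=\frac{f(y)}{f(x)}x$ on $K$ is equivalent to $\dim\ker(r-A)=1$. Applying semi-nonsupportingness to the pairs $(Qy,g)$ only yields $g(Qy)>0$, which you already have.

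The missing idea is to build the vanishing functional from $Q$ itself. Once $m=1$, $Q$ is a positive projection onto $Y:=\ker(r-A)$. So for every $h\in Y^*$ with $h\ge 0$ on $K\cap Y$, the functional $h\circ Q$ lies in $K^*$ and coincides with $h$ on $Y$; in particular it is nonzero when $h\neq 0$. Also $Y=Q(K)-Q(K)$, so $K\cap Y$ is a closed generating cone in $Y$. By your own argument, every nonzero element of $K\cap Y$ is almost interior in $X$.

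So if $\dim Y\ge 2$, it suffices to find a nonzero $e\in K\cap Y$ and a nonzero $h\in Y^*$ with $h\ge 0$ on $K\cap Y$ and $h(e)=0$. Then $(h\circ Q)(e)=0$ contradicts the almost interiority of the eigenvector $e$.

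Such $e$ and $h$ exist:
\begin{itemize}
\item If $Y$ is finite-dimensional, $K\cap Y$ has nonempty interior in $Y$. Any nonzero boundary point, which exists because the cone is pointed and $\dim Y\ge 2$, has a supporting functional.
\item In general, the Bishop--Phelps theorem, applied to the closed convex set $K\cap Y$ in the Banach space $Y$, gives a nonzero support point $e$. Since $K\cap Y$ is a cone, the supporting functional can be normalized to be $\ge 0$ on $K\cap Y$ and to vanish at $e$.
\end{itemize}
This proves $\dim\ker(r-A)=1$, which gives item 2 together with the rank-one form of $Q$ you were aiming for.
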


The condition that $r(A)$ is a pole of the resolvent map guarantees that $r(A)$ will be an eigenvalue of $A$ (see for example Corollary 6.12 in chapter VII of \cite{conway}). A sufficient condition for $r(A)$ to be a pole is to assume that $A$ is a cone-preserving compact operator on $\X$ with $r(A)>0$ (Theorem VII.4.5 in \cite{DunfordSchwartz} guarantees that nonzero spectral values of a compact operator are poles, and Theorem \ref{spradinsp} guarantees $r(A)$ is one of those values). 

The following is a version of Theorem \ref{ordersprad} which gives the strict ordering of spectral radii under additional assumptions. 

\begin{theorem}\label{strictordersp} (Theorem 4.3 in \cite{MarekIvo1970FToP}) Let $\X$ be a Banach space ordered by a generating and normal cone $\K$. Let $A$ and $B$ be bounded cone-preserving operators such that $B\leq A$ and $B\neq A$. Furthermore assume that $A$ is a semi-nonsupporting operator with eigenvector $x\in\K$ and eigenfunctional $f\in\KK$ both corresponding to $r(A)$, and assume that $g\in\KK$ is an eigenfunctional of $B$ corresponding to $r(B)$. Then
    $$ r(B) < r(A).
    $$
\end{theorem}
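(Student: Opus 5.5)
The plan is to argue by contradiction, starting from the non-strict comparison. By Theorem \ref{ordersprad}, $B\leq A$ gives $r(B)\leq r(A)$. So suppose $r(B)=r(A)=:r$; I will derive $A=B$, which contradicts the hypothesis. The tools are the eigenvector $x\in\K\setminus\{0\}$ of $A$ (with $Ax=rx$), the eigenfunctional $g\in\KK\setminus\{0\}$ of $B$ (with $g\circ B=r\,g$), and the semi-nonsupporting property of $A$. The eigenfunctional $f$ of $A$ should not be needed beyond ensuring the setting is nondegenerate.

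First I will check that $r>0$. If $r(A)=0$, then for every $n\geq 1$ and every $y\in\K$ we have $f(A^n y)=((A^*)^n f)(y)=0$. This contradicts semi-nonsupporting applied to the pair $(y,f)$, under the convention that $n(x,f)\geq1$. This degenerate case is where I expect the only real subtlety, since it depends on how the exponent $n$ in the definition is understood. If $n=0$ is permitted, I would instead try to use $f$ together with $x$ to rule out $r=0$.

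Next, I will show that $x$ is an almost interior point. Take $k\in\KK\setminus\{0\}$. Semi-nonsupporting gives some $n$ with
$$0<k(A^n x)=r^n k(x),$$
and since $r>0$ this forces $k(x)>0$. Now set $h:=g\circ(A-B)$. Because $A-B$ is cone-preserving and $g\in\KK$, we have $h\in\KK$. Moreover
$$h(x)=g(Ax)-g(Bx)=r\,g(x)-r\,g(x)=0.$$
Since $x$ is almost interior, this forces $h=0$, that is, $g\circ A=g\circ B=r\,g$. In other words, $g$ is also an eigenfunctional of $A$ for $r$.

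Then I will show that $g$ is strictly positive. For $y\in\K\setminus\{0\}$, semi-nonsupporting applied to the pair $(y,g)$ gives some $n$ with
$$0<g(A^n y)=r^n g(y),$$
so $g(y)>0$. Finally, for any $y\in\K$ the vector $(A-B)y$ lies in $\K$ and satisfies $g((A-B)y)=h(y)=0$. Strict positivity of $g$ then gives $(A-B)y=0$. Since $\K$ is generating, every element of $\X$ is a difference of elements of $\K$, so $A=B$. This contradicts $B\neq A$, and therefore $r(B)<r(A)$.
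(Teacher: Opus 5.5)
The paper does not prove this statement; it imports it from Marek (Theorem 4.3 in \cite{MarekIvo1970FToP}), so there is no in-paper argument to compare yours against. Your proof is correct and self-contained. Each key step holds:
\begin{itemize}
\item $h=g\circ(A-B)\in\KK$ and $h(x)=(r(A)-r(B))g(x)$.
\item Almost-interiority of $x$ forces $h=0$, so $g$ becomes an eigenfunctional of $A$ for $r$.
\item The semi-nonsupporting property of $A$ then makes $g$ strictly positive.
\item $g((A-B)y)=0$ for $y\in\K$ gives $A=B$ on $\K$, hence on $\X=\K-\K$.
\end{itemize}
You replace the citation with a short direct argument. That argument also shows the hypotheses are stronger than needed.

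First, the subtlety you anticipate in the degenerate case does not arise, and your step establishing $r>0$ can be deleted. From $0<k(A^n x)=r^n k(x)$ and $r^n\geq 0$ you already get $k(x)>0$. Likewise $0<g(A^n y)=r^n g(y)$ gives $g(y)>0$. This holds whatever the convention on $n$; with $n\geq 1$, the same inequality shows $r>0$ automatically. So the eigenfunctional $f$ of $A$ is never used.

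Second, you do not need Theorem \ref{ordersprad}. Once $x$ is almost interior, $g(x)>0$, and $0\leq h(x)=(r(A)-r(B))g(x)$ gives $r(B)\leq r(A)$ directly. Hence normality of $\K$ plays no role, and the generating property is used only in the last step.

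In the paper's application (Theorem \ref{greggstrictordtrich}, case $R_0>1$), this means only the eigenvector $y$ of $T+\frac{1}{R_0}F$ is required, not its eigenfunctional. The citation buys brevity; your route buys a verifiable proof under weaker assumptions.
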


The following lemma allows us to show that if $R_0>0$, then $T+\frac{1}{R_0}F$ is semi-nonsupporting whenever $A$ is. 

\begin{lemma}\label{seminonsupopsareord} Let $\X$ be a Banach space ordered by a cone $\K$. Suppose $B$ and $C$ are bounded cone-preserving linear operators with $B\leq C$. If $B$ is semi-nonsupporting, then so is $C$. 
\end{lemma}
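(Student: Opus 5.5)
The plan is to show by induction on $n$ that $C^n - B^n$ is cone-preserving for every $n\geq 0$, i.e. $B^n \leq C^n$. Once that is established, the semi-nonsupporting property transfers directly from $B$ to $C$.

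For the induction, the case $n=0$ is trivial, since $I-I=0$ preserves $\K$. For the inductive step, write
\begin{align*}
C^{n+1}-B^{n+1} = C(C^n - B^n) + (C-B)B^n .
\end{align*}
Each summand is a composition of cone-preserving operators: $C$ and $C^n-B^n$ in the first term (the latter by the inductive hypothesis), and $C-B$ and $B^n$ in the second. Compositions of cone-preserving operators preserve $\K$, and since $\K$ is a convex cone it is closed under addition. Hence $C^{n+1}-B^{n+1}$ maps $\K$ into $\K$.

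With this in hand, fix $x\in\K\setminus\{0\}$ and $f\in\KK\setminus\{0\}$. Because $B$ is semi-nonsupporting, there is an $n=n(x,f)$ with $f(B^nx)>0$. Since $(C^n-B^n)x\in\K$ and $f\in\KK$, we have $f((C^n-B^n)x)\geq 0$, and therefore
\begin{align*}
f(C^nx) = f(B^nx) + f((C^n-B^n)x) \geq f(B^nx) > 0 .
\end{align*}
Applying this to every pair $(x,f)$ shows that $C$ is semi-nonsupporting. Note that the argument uses neither the generating nor the normal property of $\K$, which matches the weaker hypothesis in the statement.

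No step here is a real obstacle. The only point needing care is keeping the order of factors in the telescoping identity correct, so that each piece is visibly a composition of operators already known to be cone-preserving.
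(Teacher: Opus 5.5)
Your proof is correct and follows essentially the same route as the paper: first establish $B^n\leq C^n$ by writing differences of powers as compositions of cone-preserving operators, then conclude with $f(C^nx)\geq f(B^nx)>0$. The only difference is cosmetic. The paper telescopes through the chain $B^n\leq B^{n-1}C\leq\cdots\leq C^n$ using $B^{i-1}C^{j+1}-B^iC^j=B^{i-1}(C-B)C^j$, whereas you run an induction on $C^{n+1}-B^{n+1}=C(C^n-B^n)+(C-B)B^n$.
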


\textit{Proof.} We begin by showing that $B^n\leq C^n$ for any $n\in\N$ whenever $0\leq B\leq C$. Suppose integers $i,j$ are such that $i\geq1$, $j\geq0$, and $i+j=n$. Then $B^iC^j \leq B^{i-1}C^{j+1}$ follows from
	$$ B^{i-1} C^{j+1}- B^iC^j = (B^{i-1}C-B^i)C^j = B^{i-1}(C-B)C^j,
	$$
because this expression is a composition of cone-preserving operators. Then 
	$$ B^n \leq B^{n-1}C \leq B^{n-2}C^2 \leq \cdots \leq B C^{n-1} \leq C^n. 
	$$
Now suppose $B$ is semi-nonsupporting. Then for any any pair $(x,f)\in(\K\setminus\{0\})\times(\KK\setminus\{0\})$, there is an $n=n(x,f)$ such that $f(B^n x)>0$. Clearly for this $x, f$, and the same value of $n$, we have 
	$$ f(C^n x) \geq f(B^n x) > 0,
	$$
showing that $C$ is semi-nonsupporting as well. $\square$

\begin{corollary}\label{seminonsuppsplit} Let $\X$ be a Banach space ordered by a cone $\K$. Suppose $A$ is a bounded, cone-preserving, semi-nonsupporting operator that splits as $A=T+F$, with $T$ and $F$ bounded, cone-preserving operators. Then $aT+ bF$ is semi-nonsupporting for any $a,b>0$. 
\end{corollary}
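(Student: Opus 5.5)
The plan is to reduce the statement to Lemma \ref{seminonsupopsareord}: I will exhibit $aT+bF$ as an upper bound, in the cone order on $\BX$, for a positive multiple of $A$ that is itself semi-nonsupporting. Set $c:=\min\{a,b\}>0$.

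The first step is to check that $cA$ is semi-nonsupporting. Take any pair $(x,f)\in(\K\setminus\{0\})\times(\KK\setminus\{0\})$. Since $A$ is semi-nonsupporting, there is an $n$ with $f(A^nx)>0$. Then
$$f((cA)^nx)=c^nf(A^nx)>0,$$
so the same $n$ works for $cA$. The operator $cA$ is also bounded and cone-preserving, because $\K$ is positively homogeneous.

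The second step is the comparison $cA\leq aT+bF$. Here
$$aT+bF-cA=(a-c)T+(b-c)F.$$
Both coefficients $a-c$ and $b-c$ are nonnegative, and $T$ and $F$ are cone-preserving. For $x\in\K$, positive homogeneity and convexity of $\K$ (which together make $\K$ closed under addition) give $(a-c)Tx+(b-c)Fx\in\K$. So $aT+bF-cA$ is cone-preserving, which is exactly $cA\leq aT+bF$.

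Finally, I apply Lemma \ref{seminonsupopsareord} with $B=cA$ and $C=aT+bF$; both are bounded and cone-preserving, and $B\leq C$. Since $B$ is semi-nonsupporting, so is $aT+bF$.

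No step is a real obstacle. The only points to watch are that both $a,b$ must be strictly positive, so that $c>0$ and $c^n>0$, and that Lemma \ref{seminonsupopsareord} requires only a cone, not generating or normal hypotheses. That matches the weaker assumptions stated in the corollary.
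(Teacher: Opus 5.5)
Your proof is correct and follows the paper's argument exactly: set $c=\min\{a,b\}$, note that $cA$ is semi-nonsupporting because $f((cA)^n x)=c^n f(A^n x)$, observe $aT+bF\geq cA$, and invoke Lemma \ref{seminonsupopsareord}. The only difference is that you spell out why $(a-c)T+(b-c)F$ is cone-preserving, which the paper leaves implicit.
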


\textit{Proof.} Let $\alpha=\min\{a,b\}$. Then $aT+bF\geq \alpha(T+F)=\alpha A$. Clearly $\alpha A$ is semi-nonsupporting as $f((\alpha A)^n x)>0$ whenever $f(A^n x)>0$,  $(x,f)\in(\K\setminus\{0\})\times(\KK\setminus\{0\})$. Thus by Lemma
\ref{seminonsupopsareord}, $aT+bF$ is semi-nonsupporting. $\square$

\begin{theorem}\label{greggstrictordtrich}  The Strict Trichotomy for Bounded Operators

Let $\X$ be a Banach space ordered by a generating and normal cone $\K$. Let $A$ be a bounded, cone-preserving, semi-nonsupporting, linear operator whose spectral radius is a pole of its resolvent map. Assume that $A$ splits as $A=T+F$, for bounded cone-preserving linear operators $T$ and $F$ with $r(T)<1$. Furthermore, suppose $T$ is not the zero operator. Define $R_0 := r(F(I-T)^{-1})$ and assume $R_0>0$. Suppose that $F(I-T)^{-1}$ has $R_0$ as a pole of its resolvent. Then exactly one of the following holds: 
    \begin{align*}
    (a) \text{ }R_0 > & \text{ } r(A) > 1, \\
    (b) \text{ }R_0 = & \text{ } r(A) = 1, \\
    (c) \text{ }R_0 < & \text{ } r(A) < 1.
\end{align*}
\end{theorem}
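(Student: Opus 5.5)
The plan is to follow the proof of Theorem \ref{greggordtrich} and sharpen its two inequalities into strict ones using Theorem \ref{strictordersp}. By Theorem \ref{greggordtrich}, the case $R_0=1$ already gives $r(A)=1$. In the other two cases we have $R_0\neq 1$ and we know $R_0\geq r(A)>1$ or $R_0\leq r(A)<1$. So it remains to rule out $R_0=r(A)$ when $R_0\neq 1$. Set $A':=T+\frac{1}{R_0}F$, which has $r(A')=1$ by Lemma \ref{spisone}.

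\textbf{Case $R_0>1$.} Here $R_0A'=R_0T+F\geq A$, and $R_0A'-A=(R_0-1)T\neq 0$ because $T\neq 0$ and $R_0>1$. We apply Theorem \ref{strictordersp} with the larger operator $R_0A'$ and the smaller operator $B=A$. Three hypotheses need checking.
\begin{enumerate}[label=(\roman*)]
\item $R_0A'$ is semi-nonsupporting. This follows from Corollary \ref{seminonsuppsplit} with $a=R_0$ and $b=1$.
\item $R_0A'$ has an eigenvector in $\K$ and an eigenfunctional in $\KK$ for $r(R_0A')=R_0$. By Theorem \ref{MarekPFthm}, it suffices that $r(A')=1$ is a pole of the resolvent of $A'$.
\item $A$ has an eigenfunctional $g\in\KK$ for $r(A)$. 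This comes from Theorem \ref{MarekPFthm}(3), since $A$ is semi-nonsupporting and $r(A)$ is a pole by hypothesis.
\end{enumerate}
Theorem \ref{strictordersp} then gives $r(A)<R_0\, r(A')=R_0$.

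\textbf{Case $0<R_0<1$.} Now $A\geq R_0A'=R_0T+F$ with $A-R_0A'=(1-R_0)T\neq 0$. We apply Theorem \ref{strictordersp} with $A$ as the larger operator and $B=R_0A'$. The operator $A$ is semi-nonsupporting, and its eigenvector and eigenfunctional come from Theorem \ref{MarekPFthm}. We also need an eigenfunctional $g\in\KK$ of $R_0A'$ for $r(R_0A')=R_0$. This again reduces to $1$ being a pole of the resolvent of $A'$, after which Theorem \ref{MarekPFthm} applies to $A'$, which is semi-nonsupporting by Corollary \ref{seminonsuppsplit}. We conclude $R_0<r(A)$.

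\textbf{Main obstacle: showing that $1$ is a pole of $\lambda\mapsto(\lambda-A')^{-1}$.} This is where the hypothesis that $R_0$ is a pole of the resolvent of $F(I-T)^{-1}$ enters. Let $G(\lambda):=\frac{1}{R_0}F(\lambda-T)^{-1}$, which is analytic near $\lambda=1$ because $1>r(T)$. Lemma \ref{rorpprop}, applied to $A'=T+\frac1{R_0}F$, gives
$$(\lambda-A')^{-1}=(\lambda-T)^{-1}\bigl(I-G(\lambda)\bigr)^{-1}.$$
By Corollary \ref{spmapconsq}, $G(1)$ has $1=r(G(1))$ as a pole of its resolvent. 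A pole is an isolated spectral point whose spectral projection $P$ has range equal to a generalized eigenspace, and on $(I-P)\X$ the operator $I-G(1)$ is invertible.

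I would first try the analytic Fredholm / Gohberg--Sigal approach for the analytic family $I-G(\lambda)$ near $\lambda=1$. The spectral projection $P(\lambda)$ of $G(\lambda)$ near the eigenvalue $1$ varies analytically, and this reduces the problem to the invertibility of $I-G(\lambda)$ restricted to the range of $P(\lambda)$. If that range is finite-dimensional, the inverse is meromorphic and we are done. In general, however, a pole of the resolvent need not have finite-rank projection.

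As a fallback I would use a positivity argument. Lemma \ref{spisone} together with the monotonicity and convexity in Corollary \ref{greggzrorpcor} shows that $\tilde r(\lambda)<1$ for $\lambda>1$. Combining this with the Neumann series for $(I-G(\lambda))^{-1}$ and the order structure, the aim is a growth bound $\|(\lambda-A')^{-1}\|\leq C(\lambda-1)^{-m}$ as $\lambda\downarrow 1$, with $m$ the order of the pole of $G(1)$. Together with isolation of $1$ in $\sigma(A')$, this would force a pole.

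Isolation of $1$ in $\sigma(A')$ should follow from the correspondence (\ref{spectequiv}) and upper semicontinuity of the spectrum of $G(\lambda)$ near the isolated point $1\in\sigma(G(1))$. This step is where I expect the real difficulty to lie.
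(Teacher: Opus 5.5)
Your reduction is the same as the paper's. After Theorem~\ref{greggordtrich} only strictness remains, and in both cases you apply Theorem~\ref{strictordersp} with the correct roles for $A$ and $R_0A'$; you are also right that $R_0A'\neq A$ relies on $T\neq 0$. The gap is the step you flag yourself. Your argument for the eigenvector and eigenfunctional of $A'$ rests entirely on $1$ being a pole of $\lambda\mapsto(\lambda-A')^{-1}$, and this is never proved. None of your three sketches closes it:
\begin{itemize}
\item The Gohberg--Sigal route needs a finite-dimensional spectral subspace, and a pole does not provide one.
\item Upper semicontinuity only keeps $\sigma(G(\lambda))$ near $\sigma(G(1))$, which contains $1$. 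So it cannot exclude $1\in\sigma(G(\lambda))$ for complex $\lambda\neq 1$ near $1$, and hence does not give isolation of $1$ in $\sigma(A')$.
\item A bound on $\|(\lambda-A')^{-1}\|$ along the real ray $\lambda\downarrow 1$ does not by itself rule out an essential singularity.
\end{itemize}
Whether $1$ must be a pole for $A'$ under these hypotheses is a delicate question, and you do not need to settle it.

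The missing idea is that Theorem~\ref{strictordersp} never asks for a pole. It asks only for an eigenvector in $K$ and an eigenfunctional in $K^*$ at the spectral radius. You can transport these algebraically from $F(I-T)^{-1}$, which is where the pole hypothesis lives:
\begin{itemize}
\item If $F(I-T)^{-1}v=R_0v$ with $v\in K\setminus\{0\}$, set $y:=(I-T)^{-1}v$. Then $y\in K\setminus\{0\}$ and $Fy=R_0(I-T)y$, i.e.\ $A'y=y$.
\item If $(F(I-T)^{-1})^*f=R_0f$ with $f\in K^*\setminus\{0\}$, then $(I-T^*)^{-1}F^*f=R_0f$ gives $F^*f=R_0(I-T^*)f$, i.e.\ $(A')^*f=f$.
\end{itemize}
Since $r(A')=1$ by Lemma~\ref{spisone}, this is exactly what you need for $R_0A'$ in both cases, and it is what the paper does.

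To obtain $v$ and $f$ in the first place, the paper cites Theorem~\ref{MarekPFthm}. However, $F(I-T)^{-1}$ need not be semi-nonsupporting; in the Leslie example it has rank one. It is safer to argue directly. If $R_0$ is a pole of order $m$, then $Q:=\lim_{\mu\downarrow R_0}(\mu-R_0)^m(\mu-F(I-T)^{-1})^{-1}$ is a nonzero positive operator with $(F(I-T)^{-1}-R_0)Q=Q(F(I-T)^{-1}-R_0)=0$. Since $K$ is generating, $Qx\neq 0$ for some $x\in K$, and $v:=Qx$ works. Since $K$ is normal, $K^*$ is generating, so $Q^*g\neq 0$ for some $g\in K^*$, and $f:=Q^*g$ works.
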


%To make this Theorem work, we need to have that 
%    \begin{align*}
%        R_0 ( T + \frac{1}{R_0}F ) &\geq A \\
%        R_0 ( T + \frac{1}{R_0}F ) &\leq A 
%    \end{align*}
%with the requirements on Marek 4.3, i.e. both $A$ and $T+\frac{1}{R_0}F$ need to be semi-nonsupporting, which will follow if we assume $A$ is semi-nonsupporting. Furthermore, we need their spectral radii to be poles of the resolvent, meaning that we need $r(A)$ to be an eigenvalue with eigenvector in the cone and the same for $T+\frac{1}{R_0}F$. Which means that we need to assume that the spectral radius is a pole of the resolvent, such as when the operator is compact (or quasi-compact). 
%

\textit{Proof.} 

After applying Theorem \ref{greggordtrich}, it remains only to be shown that $R_0>r(A)$ when $R_0\geq r(A)$ and $R_0<r(A)$ when $R_0\leq r(A)$. 

By Theorem \ref{MarekPFthm}, $R_0$ is an eigenvalue of $F(I-T)^{-1}$ with eigenvector $v$ and eigenfunctional $f$ in the cone and dual cone, repsectively. Next it is shown that $T+\frac{1}{R_0}F$ also has eigenvector and eigenfunctional in the cone and dual cone corresponding to its spectral radius. Recall from Lemma \ref{spisone} that $r(T+\frac{1}{R_0}F)=1$.  First note that $f$ is also the eigenfunctional corresponding to $r\left(T+\frac{1}{R_0}F\right)=1$ by the following:
    \begin{align*}
        [F(I-T)^{-1}]^* f &= R_0 f \\
      \implies \hspace{28pt} \frac{1}{R_0}F^* f &= (I-T^*) f \\
     \implies   (T + \frac{1}{R_0}F)^* f &=  f. \\
    \end{align*}
Note that if $v$ is an eigenvector of $F(I-T)^{-1}$ corresponding to $R_0$, then setting $y=(I-T)^{-1}v$ and noting that $(I-T)^{-1}Fy= R_0y$ implies that $(T+\frac{1}{R_0}F)y=y$. This shows that $y\in\K\setminus\{0\}$ is an eigenvector of $T+\frac{1}{R_0}F$ corresponding to its spectral radius, 1.

Note that since $A$ has its spectral radius as a pole of the resolvent, Theorem \ref{MarekPFthm} implies that $r(A)$ is an eigenvalue of $A$ with corresponding eigenvector and eigenfunctional in the cone and dual cone, respectively.

Consider the case where $R_0>1$. Then $R_0(T+\frac{1}{R_0}F)\geq A$. The operator $A$ is assumed to be semi-nonsupporting, and so by Corollary \ref{seminonsuppsplit}, $R_0(T+\frac{1}{R_0}F)$ is semi-nonsupporting. Also clearly $R_0(T+\frac{1}{R_0}F)\neq A$ since $R_0\neq1$. We have shown that $T+\frac{1}{R_0}F$, hence $R_0(T+\frac{1}{R_0}F)$, has an eigenvector in $\K$ and an eigenfunctional in $\KK$ corresponding to its spectral radius. Similarly by Theorem \ref{MarekPFthm}, $A$ has an eigenfunctional in $\KK$ corresponding to $r(A)$. Thus by Lemma \ref{spisone}, Corollary \ref{spmapconsq}, and Theorem \ref{strictordersp},
	$$ R_0 = R_0r(T+\frac{1}{R_0}F) = r(R_0(T+\frac{1}{R_0}F)) > r(A).
	$$

Consider the case where $0<R_0<1$. Then $R_0(T+\frac{1}{R_0}F)\leq A$. Again clearly $R_0(T+\frac{1}{R_0}F)\neq A$ since $R_0\neq1$. We have shown that $T+\frac{1}{R_0}F$, hence $R_0(T+\frac{1}{R_0}F)$, has an eigenfunctional in $\KK$ corresponding to its spectral radius. Similarly by Theorem \ref{MarekPFthm}, $A$ has an eigenvector in $\K$ and an eigenfunctional in $\KK$ corresponding to $r(A)$. Finally, again by Lemma \ref{spisone}, Corollary \ref{spmapconsq}, and Theorem \ref{strictordersp},
	$$ R_0 = R_0r(T+\frac{1}{R_0}F) = r(R_0(T+\frac{1}{R_0}F)) < r(A). \text{ } \square
	$$

\section{ Example:  Leslie Models of Population Growth on $\ell_p$}

Consider an age-structured population model with a countably infinite number of discrete age classes. The state of the population at time step $t$ is represented by $x(t)= (x_i(t))\in\ell_p$ for $p\in[1,\infty]$, $i\in\N$, $t\in\N\bigcup\{0\}$, where each component $x_i(t)$ is the population size of age class $i$ at time $t$. To construct a Leslie model in this setting, we consider a dynamical system of the form $x(t+1)= Ax(t)$, where $A=T+F$ is the combination of a a survival operator $T$ and a fertility operator $F$, defined in the following. 

Let $f=(f_i)\in \ell_q$ ($q$ such that $\frac{1}{p}+\frac{1}{q}=1$) with $f_i\geq0$ for all $i$. Each $f_i$ represents the expected number of offspring in a single time step per individual in age class $i$. For $x=(x_i)$, define $Fx:= (\sum_{i=1}^\infty f_ix_i,0,0,0,....)$. Define a sequence $(t_i)$ such that each $t_i$ represents the expected fraction of individuals from age class $i$ to survive to age class $i+1$ in a single time step. It is assumed $0\leq t_i\leq 1$ for all $i\in\N$ and $\limsup_{i\to\infty} t_i <1$. The latter condition will be used to show that $r(T)<1$. For $x=(x_i)$, define $Tx=(0,t_1x_1, t_2x_2, t_3x_3,...)$.

The action of $A=T+F$ on an element $x=(x_i)\in\ell_p$ can be represented by an infinite dimensional matrix:
	$$ \begin{bmatrix} f_1 & f_2 & f_3 & f_4 & ... \\
				t_1 & 0 & 0 & 0 & .... \\
				0 & t_2 & 0 & 0 & .... \\
				0 & 0 & t_3 & 0 & .... \\
				\vdots & \vdots & \vdots & \ddots & .... \\
\end{bmatrix}
\begin{bmatrix} x_1 \\
x_2 \\
x_3 \\
x_4 \\
\vdots \\
\end{bmatrix} = 
\begin{bmatrix} \sum_{i=1}^\infty f_ix_i \\
t_1 x_1 \\ 
t_2x_2 \\
t_3x_3 \\
\vdots \\
\end{bmatrix}.
	$$

$T$ and $F$ (and therefore $A$) preserve the standard cone in $\ell_p$, which is 
	$$K=\ell_p^+:=\{x\in\ell_p \st x_i\geq0 \text{ }\forall i\in\N\}.
	$$
This cone is clearly generating and normal with normality constant $\gamma=1$. 

\textbf{Claim 1:} $T:\ell_p\to\ell_p$ is a $\K$-preserving, bounded linear operator, and $r(T)<1$. 

\textit{Proof.} The fact that $T$ is cone-preserving is obvious since $0\leq t_i\leq 1$. Also since $0\leq t_i\leq 1$, $|t_ix_i|\leq |x_i|$ for all $i\in\N$. Therefore $||Tx||_p\leq ||x||_p$ for all $x\in\ell_p$, and so $T$ is bounded.

Since $\limsup_{i\to\infty} t_i <1$, there is some $m\in\N$ and $\varepsilon>0$ such that $t_i<1-\varepsilon$ for all $i> m$. The sequence $(t_i)$ is component-wise bounded above by a sequence $(s_i)$ defined by  \\
$s_i:=\begin{cases} 1 & \text{ if } i\leq m \\
1-\varepsilon & \text{ otherwise} \\
\end{cases}.$
Define an operator $S$ by $Sx= (0, s_1x_1,s_2x_2,s_3x_3,....)$. It is clear that $S\geq T\geq0$, so by Theorem \ref{ordersprad} if we show that $r(S)<1$, then $r(T)<1$. Considering Gelfand's formula for the spectral radius,
	$$ r(S)= \lim_{n\to\infty} ||S^n||^\frac{1}{n},
	$$
we fix $x\in\ell_p$ with $||x||_p=1$. Then for $n> m$, 
\begin{align*}
	||S^n x||_p &\leq (1-\varepsilon)^{n-m}||x||_p =  (1-\varepsilon)^{n-m}.    \\
\end{align*}
Then
\begin{align*}
	||S^n|| = \sup_{||x||_p=1} ||S^n x||_p &\leq (1-\varepsilon)^{n-m} ,  \\
	||S^n||^\frac{1}{n} &\leq (1-\varepsilon)^\frac{n-m}{n}, \\
	r(S)= \lim_{n\to\infty} ||S^n||^\frac{1}{n} &\leq \lim_{n\to\infty} (1-\varepsilon)^\frac{n-m}{n} = (1-\varepsilon) < 1. \square \\
\end{align*}	

\textbf{Claim 2:} $F$ is a $\K$-preserving bounded linear operator. 

\textit{Proof.} It is obvious that $F$ is $\K$-preserving since $f_i\geq0$ for all $i\in\N$. To see that $F$ is bounded, note that for $x=(x_i)\in\ell_p$, 
	$$||Fx||_p = \left|\sum_{i=1}^\infty f_ix_i \right| \leq  \sum_{i=1}^\infty |f_ix_i|  \leq ||f||_q ||x||_p
	$$ 
by H\"{o}lder's inequality. $\square$

% What conditions must we impose on A,T and F to get strict inequality?
Since all of the conditions of Theorem \ref{greggordtrich} are satisfied, we may define $R_0:= r(F(I-T)^{-1})$. Writing $(I-T)^{-1}$ as an infinite matrix yields
	$$ (I-T)^{-1} = \sum_{n=0}^\infty T^n = 
\begin{bmatrix} 
1 & 0 & 0 & 0 & 0 &... \\
t_1 & 1 & 0 & 0 & 0 &  ... \\
t_1t_2 & t_2 & 1 & 0 & 0 & ...\\
t_1t_2t_3 & t_2t_3 & t_3 & 1 & 0 & ... \\
t_1t_2t_3t_4 & t_2t_3t_4 & t_3t_4 &t_4 & 1 & ...\\   
\vdots &  \vdots & \vdots & \vdots & \ddots &\ddots \\
\end{bmatrix},
	$$
and it follows that
$$F(I-T)^{-1}=
\begin{bmatrix}
f_1+\sum_{i=2}^\infty(f_i\prod_{j=1}^{i-1}t_j)  & f_2+\sum_{i=3}^\infty(f_i\prod_{j=2}^{i-1}t_j) &  f_3+\sum_{i=4}^\infty(f_i\prod_{j=3}^{i-1}t_j) & ... \\
0 & 0 & 0 & ... \\
0 & 0 & 0 & ... \\
0 & 0 & 0 & ... \\
\vdots & \vdots & \vdots & ... \\
\end{bmatrix}.
$$

Since $F(I-T)^{-1}$ is a rank 1 operator with unique normalized eigenvector $(1,0,0,0,...)$, it is easy to see that $R_0$ is given by the expression $f_1+\sum_{i=2}^\infty(f_i\prod_{j=1}^{i-1}t_j)$. By Theorem \ref{greggordtrich}, this expression gives a bound for how far $r(A)$ is from 1, and determines whether $r(A)$ is greater than, less than, or equal to $1$.

% lim inf doesn't work
%Since $\liminf_{i\to\infty} t_i <1$, we can fix a subsequence $(t_{i_k})$ such that $t_{i_k}< \delta <1$ for all $k\in\N$. Define a new sequence $(s_j)$ by 
%	$$s_j=\begin{cases} \delta & \text{ if } j=i_k \text{ for some $k\in\K$, i.e. if $j$ indexes an element of our subsequence of $t_i$.} \\
%1 & \text{ otherwise,} \\
%\end{cases}
%	$$
%and define an operator $S$ by $Sx= (0, s_1x_1,s_2x_2,s_3x_3,....)$. It is clear that $S\geq T\geq0$, so by Theorem \ref{ordersprad} if we show that $r(S)<1$, then $r(T)<1$. Using Gelfand's formula for the spectral radius: 
%	$$ r(S)= \lim_{n\to\infty} ||S^n||^\frac{1}{n},
%	$$
%Fix $x\in\ell_p$ with $||x||_p=1$. Then $||S^n x||^\frac{1}{n} \leq 
%

%\printbibliography
%
\bibliographystyle{plain}
\bibliography{Primo.bib}

%\section{ References } 

%  R. Thieme, H. (1998). Remarks on resolvent positive operators and their perturbation. Discrete and Continuous Dynamical Systems. Series A, 4(1), 73–90. https://doi.org/10.3934/dcds.1998.4.73

\end{document}